\documentclass{article}

\usepackage{graphicx}
\usepackage{caption}
\usepackage{xcolor} 
\usepackage{subcaption}
\usepackage{makeidx} 
\usepackage{enumitem}
\usepackage{amsfonts,dsfont,mathrsfs}
\usepackage{amssymb}
\usepackage{amsmath}
\usepackage[colorinlistoftodos]{todonotes}
\usepackage[left=2.5cm, right=2.5cm, top=2.5cm, bottom=3cm]{geometry}
\usepackage{caption}
\usepackage{color} 
\usepackage{bm,stackrel}
\usepackage{comment}
\usepackage[sort]{cite}
\usepackage{tikz,pgf}
\usepackage{amsopn}
\usepackage{graphicx}
\usepackage{epstopdf}
\usepackage{algorithm,algorithmic}

\usetikzlibrary{shapes.geometric}
\usetikzlibrary{arrows,patterns}
\usepackage{pgfplots}
\pgfplotsset{compat=newest}
\usepgfplotslibrary{patchplots}

\definecolor{greenExample}{rgb}{0.13, 0.55, 0.13}
\definecolor{antiquefuchsia}{rgb}{0.57, 0.36, 0.51}

\title{Existence of solutions for deterministic bilevel games under a general Bayesian approach}

\author{
	D. Salas\thanks{Instituto de Ciencias de la Ingenier\'ia, Universidad de O'Higgins, Rancagua, Chile. (david.salas@uoh.cl, anton.svensson@uoh.cl). The first author was partially funded by ANID-Chile under the grant \textit{FONDECYT postdoctorado 3190229} and by MathAmsud program trough the project MATHAMSUD 20-MATH-08. The second author was partially funded by ANID-Chile under the grant \textit{FONDECYT postdoctorado 3210735}.}
	\and 
	A. Svensson\footnotemark[2] 
}

\newlength\myindent
\setlength\myindent{2em}
\newcommand\bindent{
	\begingroup
	\setlength{\itemindent}{\myindent}
	\addtolength{\algorithmicindent}{\myindent}
}
\newcommand\eindent{\endgroup}

\providecommand{\co}{\mathop{\rm co}\nolimits}
\providecommand{\cco}{\mathop{\rm \overline{co}}\nolimits}

\providecommand{\gph}{\mathop{\rm gph}\nolimits}
\providecommand{\dom}{\mathop{\rm dom}\nolimits}
\providecommand{\into}{\mathop{\rm int}\nolimits}

\providecommand{\diam}{\mathop{\rm diam}\nolimits}
\providecommand{\argmin}{\mathop{\rm argmin}}

\providecommand{\spn}{\mathop{\rm span}\nolimits}

\providecommand{\ri}{\mathop{\rm ri}\nolimits} 
\providecommand{\aff}{\mathop{\rm aff}\nolimits} 
\providecommand{\dim}{\mathop{\rm dim}\nolimits} 
\providecommand{\adim}{\mathop{\rm adim}\nolimits}

\let\epsilon\varepsilon
\DeclareMathOperator{\supp}{supp}
\DeclareMathOperator{\cl}{cl}

\providecommand{\tto}{\mathop{\rightrightarrows}\nolimits}

\providecommand{\norm}{\mathop{ \|\cdot\| }\nolimits}

\providecommand{\Liminf}{\mathop{\rm Liminf}\limits}
\providecommand{\Limsup}{\mathop{\rm Limsup}\limits}
\providecommand{\Lim}{\mathop{\rm Lim}\limits}

\newcommand{\R}{\mathbb{R}}

\newcommand{\N}{\mathbb{N}}

\newcommand{\E}{\mathbb{E}}

\newcommand{\Ball}{ \mathbb{B}}

\newcommand{\proj}{\mathrm{proj}}

\newcommand{\ind}{\mathds{1}}
\newcommand{\centroid}{\mathfrak{c}}

\usepackage{amsmath,amsfonts,amssymb,dsfont,mathrsfs,amsthm}
\numberwithin{equation}{section}
\numberwithin{figure}{section}

\newtheorem{remark}{Remark}
\newtheorem{definition}{Definition}
\newtheorem{example}{Example}
\newtheorem{theorem}{Theorem}
\newtheorem{lemma}{Lemma}
\newtheorem{proposition}{Proposition}
\newtheorem{corollary}{Corollary}




\begin{document}
	
	\maketitle
	
	\begin{abstract}
		In 1996, Mallozzi and Morgan \cite{Mallozzi1996} proposed a new model for Stackelberg games which  we refer here to as the Bayesian approach. The leader has only partial information about how followers select their reaction among possibly multiple optimal ones. This partial information is modeled as a decision-dependent distribution, the so-called belief of the leader. In this work, we formalize the setting of this approach for bilevel games admitting multiple leaders and we provide new results of existence of solutions. We pay particular attention to the fundamental case of linear bilevel problems, which has not been studied before, and which main difficulty is given by possible variations in the dimension of the reaction set of the follower. Our main technique to address this difficulty is based on a stronger notion of continuity for set-valued maps that we call rectangular continuity, and which is verified by the solution set of parametric linear problems. Finally, we provide some numerical experiments to address linear bilevel problems under the Bayesian approach.
	\end{abstract}
	
	\begin{keywords}
		Bilevel game, Bayesian approach, Belief, decision-dependent distribution, rectangular continuity, Differential Evolution.
	\end{keywords}
	
	\begin{AMS}
		Primary:  	91A65; Secondary:  90C15, 49J53
	\end{AMS}
	
	
	\section{Introduction}\label{sec:Intro}
	
	The term \emph{bilevel games} is here used to refer to non-cooperative models ranging from bilevel programming problems \cite{dempe2002foundations} to multi-leader-follower games \cite{aussel2020stateof,HuFukushima2015}. They were first introduced by H. von Stackelberg \cite{von1934market} in the 30's to model hierarchical decision problems, and since the last few decades they have attracted the attention of many researchers, due to their large potential in applications. 
	Bilevel games consist in a non-cooperative interaction between two or more agents, which are divided in two groups: the leaders and the followers. The main difference between these two groups of agents is that, on the one hand, leaders know how followers make their decisions, and so they are able to include this information in their own decision-making processes. On the other hand, followers are only capable of observing the decisions of the leaders once they have been made, and then react accordingly. 
	
	It is well known that, unless the followers' reactions are unique, the problems of the leaders are ill-defined. To solve this issue, the most common approaches that we find in the literature are the \textit{Optimistic approach} and the \textit{Pessimistic approach} \cite{dempe2002foundations,liu2018pessimistic}. A leader is optimistic, if he or she assumes that the followers will provide the reaction, among the optimal ones, that favors him or her the most. A leader is pessimistic, if he or she assumes that the followers will provide the reaction that harms him or her the most.
	
	In many applied situations, however, followers are neither adversarial nor cooperative. Thus, some other approaches have been proposed as alternatives between the optimistic/pessi\-mis\-tic dichotomy, like using selections as single-valued reaction of the follower (see, e.g., \cite[pp. 6]{DempeEtAl2015}), or moderate approaches that consider convex combinations between the min/max values over the followers' variables \cite{alves2019new,jia2011new}. 
	
	We will focus our attention in a more general approach that was first introduced by L. Mallozzi and J. Morgan in \cite{Mallozzi1996} for bilevel games with one leader and which we refer here to as the Bayesian approach (see Remark \ref{rem:renaming}). The approach works as follows: Each leader is endowed with a private belief about how followers select their reaction. This belief is modeled as a probability distribution over the set of optimal reactions. Then, leaders play a non-cooperative game, where each leader's payoff function depends on his or her decision, the decisions of the other leaders, and his or her private belief.
	
	Clearly, the Bayesian approach encloses both Optimistic and Pessimistic approaches, since optimism and pessimism can be modeled as beliefs. However, the price to pay is that beliefs are complex
	objects to work with, as they are decision-dependent distributions \cite{JonsbratenWetsWoodruff1998}. 
	Programming with decision-dependent uncertainty has been explored recently in several works \cite{ahmed2000strategic,peeta2010pre,nohadani2018optimization,GoelGrossmann2006,drusvyatskiy2020stochastic}, and it is an active field of research due to its challenging difficulties, both from the theoretical and algorithmic point of view.

	\subsection*{Literature review}
	In \cite{Mallozzi1996}, the analysis of the Bayesian approach for bilevel games (with a single leader) is focused on the existence of solutions in two cases: first, when the reactions of the follower are finite, and second, when the values of the reaction map have positive Lebesgue measure (which is equivalent to have nonempty interior if they are convex).
	The second case has been applied to the so-called regularized bilevel problem, where the reaction map consists of approximate solutions of the lower-level problem being a convex parametric optimization problem. In \cite{MallozziMorgan2005}, the above results have been adapted to reaction maps whose values have either nonempty interior or consist of a single point. 
	
	However, none of the results of \cite{Mallozzi1996,MallozziMorgan2005} can be applied to bilevel problems with reaction maps of varying affine dimension. In particular, they cannot be applied (in general) to linear bilevel problems with exact solutions on the lower-level. 
	
	To the best of our knowledge, there is no other contribution in the literature about the Bayesian approach, beyond the aforementioned works. This is somehow confirmed by the recent survey \cite{caruso2020regularization}.
	
	\subsection*{Our contributions}
	

	We first formalize a natural extension of the Bayesian approach to bilevel games, admitting multiple leaders. This is done in the Preliminaries (Section \ref{sec:Pre}).
	In Section \ref{subsec:GeneralExistence}, we provide a general sufficient (and necessary, in some sense) condition to have existence of solutions for the Bayesian approach, which is the weak continuity of the beliefs in the sense of measure-valued maps. Weak continuity allows us to guarantee existence of solutions for the case of one leader (Theorem \ref{thm:ExistenceSingleLeader}), and existence of mixed equilibria for the case of multiple leaders (Theorem \ref{thm:ExistenceMixedEq}).
	
	In Section \ref{subsec:RecContinuity}, we introduce a new notion of continuity, called \textit{rectangular continuity}. Intuitively, a rectangularly continuous set-valued map might change the affine dimension of its values, but in a controlled fashion. We show, in Section \ref{subsec:ConstructionBeliefs}, that if the reaction map of the followers in a bilevel game has convex values and it is rectangularly continuous, then all the beliefs coming from a continuous strictly positive density are weak continuous. This is our first main result, enclosed in Theorem \ref{thm:WeakContinuityUnderRecContinuity}.
	
	Finally, in Section \ref{subsec:linearrectangular}, we show that the solution maps of parametric linear problems of the form $x\mapsto \min_y\{ \langle c,y\rangle\ :\ Ax+By\leq b\}$ are not only continuous (which is well known), but they are rectangularly continuous. In fact, we prove this result for separable parametric problems given by weakly analytic functions in Theorem \ref{thm:RecContinuityWeakAnalytic}, for which parametric linear problems are a particular case. This is our second main contribution. Our results are new even in the case of linear bilevel programming, and this is summarized in Corollary \ref{cor:ExistenceLinear}.
	
	In Section \ref{sec:Computability}, we provide some numerical experiments for bilevel programming under the Neutral approach, which is the Bayesian approach with the belief induced by uniform distributions. For this numerical tests, we implement the differential evolution metaheuristic \cite{DiffEvolutionMatlab} and compute the Bayesian objective function through a Monte-Carlo method. We use the BOLIB library \cite{zhou2020bolib} as benchmark problems.
	
	
	\section{Preliminaries and model formulation}\label{sec:Pre}
	
	Throughout this article, we 
	gather three main fields of mathematics that we assume the reader is
	acquainted with in a basic level: (Bilevel) Game theory \cite{fudenberg1991game,Tadelis2013,dempe2002foundations}, Set-valued analysis \cite{AubinFrankowska1990,rockafellar2009variational} and Measure theory \cite{klenke2013probability}. 
	
	\subsection{Notation}\label{subsec:Notation}
	We will work on finite-dimensional spaces, namely $\R^d$, endowed with the Euclidean norm $\norm$ and the inner product $\langle \cdot,\cdot\rangle$. An open ball of radius $r>0$ and centered at $x\in \R^d$ is denoted by $B(x,r)$, and we write $B(r)$ if the ball is centered at the origin. The closed unit ball centered at the origin will be denoted by $\mathbb{B}$. Given a subset $A$ of $\R^d$, we denote by $\cl(A)$, $\into(A)$, $\partial A$, $\ri(A)$ and $\aff(A)$, its closure, interior, boundary, relative interior and affine hull, respectively. The affine dimension of $A$, that is, the dimension of $\aff(A)$, is denoted by $\adim(A)$. We denote by $\co(A)$ and $\cco(A)$ the convex hull and the closed convex hull of $A$, respectively. The orthogonal set to $A$ is the set 
	\[
	A^{\perp}:=\{x\in \R^d: \langle x,a\rangle =0,\, \forall a\in A\}.
	\] 
	Given sets $A,B\subset \R^p$ and $\eta\in\R$ we write $A+B$ and $\eta A$ to denote the Minkowski addition and scalar multiplication, respectively.
	
	
	For two points $x,y\in \R^d$, we write $[x,y] = \{ tx + (1-t)y\ :\ t\in [0,1] \}$ to denote the closed segment enclosed by $x$ and $y$. Similarly, we write $(x,y)$, $[x,y)$ and $(x,y]$, for the open segment, and the semi-open segments, respectively. For a nonempty closed convex set $C\subset\R^d$ and a point $x\in \R^d$, we denote by $\proj_C(x)$ or by $\proj(x;C)$ the metric projection of $x$ onto $C$. If $C=A\times B$ is a convex closed product set on the product space $\R^d\times\R^p$, we also denote by $\proj_A(\cdot)$ the parallel projection onto $A$, by identifying $A$ with $A\times\{ 0\}$.  
	
	
	Given two nonempty sets $X\subset \R^d$ and $Y\subset \R^p$, and a set-valued map $S:X\tto Y$, 
	we denote the (effective) domain of $S$  by $\dom S =\{ x\in X\ :\ S(x)\neq\emptyset\}$, and the range of $S$ by $S(X) = \{ y\in Y\ :\ \exists x\in X,\, y\in S(x) \}$. The graph of $S$ is denoted by $\gph S = \{(x,y)\in X\times Y\ :\ y\in S(x)\}$.

	Let $Y\subset\R^p$ be a nonempty closed set. We denote by $\mathcal{B}(Y)$ the Borel $\sigma$-algebra of $Y$, that is, the $\sigma$-algebra generated by the topology in $Y$. We recall that a measure $\nu$ defined over $\mathcal{B}(Y)$ is called a Borel measure.
	Given an integer $k\in \{0,\ldots,p\}$, we will denote by $\lambda_k$ the $k$-dimensional Hausdorff measure. It is well-known that for every affine subspace $H$ of $\R^p$ of dimension $k$, $\lambda_k$ coincides with the usual Lebesgue measure on $H$ (see, e.g., \cite{EvansGariepy2015}). From now on, we will simply refer to $\lambda_k$ as the $k$-dimensional Lebesgue measure. In the full dimensional case ($k=p$) we omit the subindex, simply writing $\lambda$.
	
	The centroid (also known as the barycenter) of a nonempty bounded convex set $K\subset \R^p$ is given by
	\begin{equation}
		\centroid(K) := \frac{1}{\lambda_k(K)}\int_K yd\lambda_k(y),
	\end{equation}
	where $k=\adim(K)$. Observe that, since $\lambda_k(K)$ is strictly positive for convex sets, the centroid is always well-defined.
	
	We denote by $\mathscr{P}(Y)$ the family of all (Borel) probability measures on $Y$, endowed with the topology of weak convergence. Recall that a sequence $(\nu_n)\subset \mathscr{P}(Y)$ weak converges to $\nu\in \mathscr{P}(Y)$, which is denoted by $\nu_n\xrightarrow{w}\nu$, if
	\[
	\int_Y f(y)d\nu_n(y) \to \int_Y f(y) d\nu(y),
	\]
	for all continuous functions $f:Y\to \R$ with bounded support. It is well known that $\mathscr{P}(Y)$ is metrizable for the induced topology by the weak convergence and, whenever $Y$ is compact, then $\mathscr{P}(Y)$ is compact as well (see, e.g., \cite[Chapter 13]{klenke2013probability}). 
	Accordingly, we will say that a measure-valued map $h:X\to \mathscr{P}(Y)$ is weak continuous if for every sequence $(x_n)\in X$ converging to a point $x\in X$, the sequence $(h(x_n))\subset \mathscr{P}(Y)$ weak converges to $h(x)$.
	
	Now, let us recall a sequence of equivalences for the weak convergence on $\mathscr{P}(Y)$, known as the Portemanteau Theorem (see, e.g., \cite[Thm. 13.16]{klenke2013probability}, and \cite[Excercise 2.6]{Billingsley1999}).
	
	\begin{theorem}[Portemanteau]\label{thm:Portemanteau} Let $Y$ be a nonempty closed subset of $\R^p$, $(\nu_n)$ be a sequence of probability measures of $\mathscr{P}(Y)$ and let $\nu\in \mathscr{P}(Y)$. The following assertions are equivalent:
		\begin{enumerate}\setlength{\itemsep}{0.3cm}
			\item $\nu_n\xrightarrow{w}\nu$. 
			\item $\liminf \int_Y fd\nu_n \geq \int_Y fd\nu$, for every lower semicontinuous function $f:Y\to\R$ bounded from below.
			\item $\limsup \nu_n(C) \leq \nu(C)$, for every closed set $C$ in $Y$.
			\item $\liminf \nu_n(U) \geq \nu(U)$, for every open set $U$ in $Y$. 
		\end{enumerate}
	\end{theorem}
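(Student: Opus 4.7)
\emph{Proof plan.} The plan is to establish the cycle of implications $(1) \Rightarrow (3) \Leftrightarrow (4) \Rightarrow (2) \Rightarrow (1)$. The equivalence $(3) \Leftrightarrow (4)$ is immediate by complementation: for any open set $U$ and its closed complement $C = Y \setminus U$, the identities $\nu_n(U) = 1 - \nu_n(C)$ and $\nu(U) = 1 - \nu(C)$ turn an upper bound on $\limsup \nu_n(C)$ into the desired lower bound on $\liminf \nu_n(U)$, and vice versa.

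For $(1) \Rightarrow (3)$, I would fix a closed set $C \subseteq Y$ and approximate $\ind_C$ from above by compactly supported continuous functions. A concrete choice is $f_k(y) = \max(0, 1 - k\,\dist(y, C)) \cdot \varphi_k(y)$, where $\varphi_k$ is a continuous cutoff equal to $1$ on the ball $B(R_k)$ and vanishing outside $B(R_k+1)$, with $R_k \to \infty$. Because each individual Borel probability measure on $\R^p$ is tight, one can choose $R_k$ so that $\nu(B(R_k)^c) < \varepsilon$; applying $(1)$ to a cutoff of $\ind_{B(R_k)}$ then ensures $\nu_n(B(R_k)^c) < 2\varepsilon$ eventually. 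From $\ind_C \leq f_k + \ind_{B(R_k)^c}$ we get $\nu_n(C) \leq \int f_k \, d\nu_n + 2\varepsilon$, and assumption $(1)$ combined with monotone convergence (as $k \to \infty$) on the first term yields $\limsup \nu_n(C) \leq \nu(C) + 3\varepsilon$, which concludes by sending $\varepsilon \to 0$.

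For $(4) \Rightarrow (2)$, after a preliminary reduction to nonnegative $f$ by shifting by a lower bound, I would use the layer-cake formula
\begin{equation*}
\int_Y f \, d\nu_n = \int_0^\infty \nu_n(\{f > t\}) \, dt,
\end{equation*}
together with the fact that each superlevel set $\{f > t\}$ is open when $f$ is lower semicontinuous. Fatou's lemma applied to the integrand in $t$, combined with $(4)$, then produces $\liminf \int_Y f \, d\nu_n \geq \int_Y f \, d\nu$. Finally, $(2) \Rightarrow (1)$ closes the cycle by applying $(2)$ to $f$ and to $-f$ separately, since both are continuous hence lower semicontinuous and both are bounded (hence bounded below) whenever $f$ has bounded support, giving the two reverse inequalities that together yield convergence of $\int f\, d\nu_n$.

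I expect the main obstacle to be the tightness-and-truncation step inside $(1) \Rightarrow (3)$. The definition of weak convergence used in the excerpt employs continuous functions with bounded support rather than arbitrary bounded continuous functions, so one cannot directly test against an unbounded approximation of $\ind_C$; instead one must simultaneously regularize $\ind_C$ (to gain continuity) and truncate at infinity (to gain bounded support), and the interaction of the two approximations has to be controlled uniformly in $n$. Everything else in the cycle is essentially a mechanical application of Fatou's lemma, complementation, or layer-cake.
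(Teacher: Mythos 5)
The paper does not prove this statement at all: Theorem \ref{thm:Portemanteau} is recalled as a classical result and justified only by citation to Klenke (Thm.~13.16) and Billingsley (Exercise~2.6), so there is no in-paper argument to compare against. Your blind proof is a correct, self-contained version of the standard argument, and the cycle $(1)\Rightarrow(3)\Leftrightarrow(4)\Rightarrow(2)\Rightarrow(1)$ closes properly: complementation for $(3)\Leftrightarrow(4)$, the layer-cake representation plus Fatou (after shifting by the lower bound) for $(4)\Rightarrow(2)$, and testing $(2)$ on $\pm f$ for $(2)\Rightarrow(1)$ are all sound, and you correctly identify that the only delicate point is $(1)\Rightarrow(3)$, where the bounded-support definition of weak convergence forces a simultaneous regularization of $\ind_C$ and truncation at infinity via tightness of the single measure $\nu$. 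Two minor bookkeeping remarks, neither a genuine gap: (i) testing $(1)$ against a cutoff that is $1$ on $B(R_k)$ and vanishes outside $B(R_k+1)$ only gives $\nu_n(B(R_k+1)^c)<2\varepsilon$ eventually, whereas your decomposition $\ind_C\leq f_k+\ind_{B(R_k)^c}$ needs control of $\nu_n(B(R_k)^c)$; fix this by taking the tightness test function supported inside $B(R_k)$ (equal to $1$ on $B(R_k-1)$), or by enlarging the plateau of $\varphi_k$; (ii) the functions $f_k$ are not monotone in $k$ (the factor $\max(0,1-k\,\dist(\cdot,C))$ decreases while $\varphi_k$ increases), so ``monotone convergence'' is not literally applicable, but the bound $f_k\leq\max(0,1-k\,\dist(\cdot,C))$, which does decrease to $\ind_C$, or dominated convergence with dominating constant $1$, gives $\limsup_k\int f_k\,d\nu\leq\nu(C)$ and the argument goes through unchanged.
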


	
	\subsection{Bilevel games and the Bayesian approach}\label{subsec:Model}
	
	
	Formally, a bilevel game is posed as follows. In the upper level, there is a finite set $\mathfrak{I}$ of players, usually known as the leaders, interacting in a non-cooperative Generalized Nash Equilibrium Problem (GNEP for short, see, e.g., \cite{facchinei2010generalized}). A GNEP, is an equilibrium problem where the feasible strategies of each player might depend on the strategies of the other players. Thus, each leader $i\in \mathfrak{I}$ has a decision variable $x_i \in K_i(x_{-i}) \subset X_i\subset \R^{d_i}$ and a function $\theta_i$ that he or she wishes to minimize, where we use the usual notation $x_{-i}\in X_{-i} = \prod_{j\in\mathfrak{I}\setminus\{i\}}X_j$, to denote the other players' strategy.
	
	Given a leaders' decision vector $x\in X=\prod_{i\in\mathfrak{I}}X_i\subset \R^d$,  the lower-level is represented by a set-valued map $S:X\tto Y\subset \R^p$ and a decision variable $y\in S(x)$. This decision variable $y$ is chosen by a finite set of independent agents, called the followers, who react to the leaders' decision vector $x$. The set $S(x)$ represents the solution set (or approximate solution set) of the followers' problem which can be given by a parametric optimization problem \cite{bank1982non}, a parametric game \cite{aussel2019towards}, a parametric variational inequality \cite{mansour2008quasimonotone}, etc. 
	
	We will call $S$ the \emph{reaction map}, and it will play a central role in our analysis.	We refer to $Y$ as the \emph{ambient space}, and if it is not explicitly defined, one simply takes $Y = \cco(S(X))$. In general, we will assume $Y$ to be compact.  Finally, each leader aims to solve the following problem:  
	
	\begin{equation}\label{eq:LeaderProblem}
		\mathcal{L}_i(x_{-i}) = \begin{cases}
			\displaystyle\min_{x_i}\,\,& \theta_i(x_i,x_{-i},y)\\
			\mbox{s.t.}\quad &x_i \in K_i(x_{-i}),\, y \in S(x).
		\end{cases}
	\end{equation}
	In what follows, unless stated otherwise, we will always assume for each $i\in\mathfrak{I}$ that $\theta_i:X\times Y\to \R$ is continuous, $X_i$ is nonempty and compact and that the set-valued map $K_i:X_{-i}\tto X_{i}$ has closed graph. We will also assume that $S$ has closed graph, which in particular implies that $S$ has closed values (and hence also measurable).
	
	
	Note that for some point $x\in X$, one might have that $S(x) = \emptyset$. In such a case, for each $i\in \mathfrak{I}$ we consider the decision $x_i$ to be infeasible for problem $\mathcal{L}_i(x_{-i})$, even if $x_i\in K_i(x_{-i})$. Under the conventional abuse of notation that $x=(x_i,x_{-i})$, 
	we can always replace $K_i(x_{-i})$ by $K_i(x_{-i})\cap\{  x_i\in X_i\ :\ (x_i,x_{-i})\in \dom S(x) \}$ for every $i\in \mathfrak{I}$ and every $x_{-i}\in X_{-i}$. Therefore
	we will always assume that 
	\begin{equation}\label{eq:NomeptyAssumptionGeneral}
		\gph K_i \subset \dom S, \quad\forall i\in\mathfrak{I}.
	\end{equation}
	This assumption, however, yields that a bilevel game has an intrinsic GNEP nature due to the possible empty values of $S$, even if originally we considered $K_i(x_{-i}) = X_i$ for each $i\in\mathfrak{I}$ and for each $x_{-i}\in X_{-i}$. This motivates the following distinction: 
	\begin{definition}[Plain bilevel games]
		We say that a bilevel game given by problems \eqref{eq:LeaderProblem} is \emph{plain} if $K_i(x_{-i}) = X_i$ for each $i\in\mathfrak{I}$ and for each $x_{-i}\in X_{-i}$, and $X\subset \dom S$.
	\end{definition}
	Observe that if the bilevel game is plain, then the leaders interact in a Nash equilibrium problem rather than in a GNEP.
	
	If only one leader is involved, we omit the subindex $i\in \mathfrak{I}$, by considering the problem $\mathcal{L}=\min_x\{ \theta(x,y)\ :\ x\in X,\, y\in S(x) \}$. In this setting, if $S(x)$ is the solution set of a parametric optimization problem, the bilevel game is known as a bilevel programming problem (see, e.g., \cite{dempe2002foundations}). The general form of a bilevel programming problem is given by
	\begin{equation}\label{eq:BilevelProgrammingProblem}
		BP:=\begin{cases}
			\displaystyle\min_{x}\,\, &\theta(x,y)\\
			\mbox{s.t.}\quad &x \in X,\\
			& y \text{ solves } \begin{cases}
				\min_{y}\,\,& f(x,y)\\
				\text{s.t.} &y\in C(x),
			\end{cases}
		\end{cases}
	\end{equation}
	where $C:X\tto Y\subset \R^p$ is the constraints set-valued map of the follower, usually of the form $C(x) = \{y\in Y\ :\ g(x,y)\leq 0\}$, where $g:X\times\R^p\to \R^q$ is a continuous vector-valued function and $f:X\times \R^p\to\R$ is continuous. The reaction map $S:X\tto Y$ is then given by $S(x) = \argmin_y\{ f(x,y)\ :\ y\in C(x)\}$.
	Observe that in this case, assuming that the ambient space $Y$ is compact, one has by Weierstrass Theorem that $\dom S = \dom C$, that is, $S(x)$ is nonempty whenever the lower-level problem is feasible. Thus, in this case, by replacing $X$ by $X\cap \dom C$, we can always assume that the lower-level is feasible within the feasible set $X$. In other words, a bilevel game with only one leader can always be assumed to be plain. 
	
	\begin{remark} In this work, we are omitting the possibility of coupling constraints for the leaders, that is, constraints of the form $G(x,y)\leq 0$ in the upper level, which are blind for the followers (see, e.g., \cite{mersha2006linear,audet2006note}). An interesting extension of our work is to adapt the Bayesian approach for this setting as well.
	\end{remark}


	As explained in the introduction, the reaction map $S$ might not be single-valued at every point, so the bilevel problem carries an ambiguity that must be handled. In the literature the most common approaches are the Optimistic and Pessimistic ones (see, e.g., \cite{liu2018pessimistic,dempe2002foundations}). In the rest of this section, we recall the Bayesian approach, which solves the ambiguity in a more general form by considering beliefs for the leaders. We start by providing a formal definition of what a belief over a set-valued map is.
	
	\begin{definition}[Belief]\label{def:Belief} Let $D$ be a nonempty subset of $\R^d$, $Y$ be a nonempty closed subset of $\R^p$, and $S:D\tto Y$ be a set-valued map with nonempty measurable values (i.e. $S(x)\in \mathcal{B}(Y)\setminus\{\emptyset\}$ for all $x\in D$). A \emph{belief over $S$} is a mapping $\beta: D\to \mathscr{P}(Y)$ assigning to each $x\in D$ a probability measure $\beta_x$ on $Y$ such that
		\begin{equation}\label{eq:ConditionWellDefined}%
			\forall x\in D,\qquad \beta_x(S(x)) = 1.
		\end{equation}
		A finite collection of beliefs $\{ \beta^i:D\to\mathscr{P}(Y)\ :\ i\in \mathfrak{I} \}$ is called a belief system over $S$. 
	\end{definition}
	
	In the case of a general bilevel game \eqref{eq:LeaderProblem}, we consider $D= \dom S$ (or $D = \bigcup_{i\in\mathfrak{I}} \gph K_i$, according to \eqref{eq:NomeptyAssumptionGeneral}). For a plain bilevel game (such as the case with a single leader), we set $D=X$.
	
	
	
	If a leader $i\in \mathfrak{I}$ has a belief $\beta^i$ over $S$, it means that for a given leaders' decision vector $x\in D\subset \R^d$, he or she predicts that the reaction of the followers is a random variable with probability distribution $\beta^i_x$. Accordingly, he or she aims to minimize the expected value of his/her cost function $\theta_i$, with respect to $\beta^i$. Thus, problem \eqref{eq:LeaderProblem} can be formulated as follows:
	\begin{equation}\label{eq:LeaderProblem-Stochastic}
		\mathcal{L}_i(x_{-i},\beta^i) = \begin{cases}
			\displaystyle\min_{x_i}\,\, &\E_{\beta^i_x}(\theta_i(x_i,x_{-i},y)):=\displaystyle\int_Y \theta_i(x_i,x_{-i},y) d\beta_x^i(y)\\
			\mbox{s.t.}\quad &x_i \in K_i(x_{-i})
		\end{cases}
	\end{equation} 
	
	Note that, since the belief $\beta^i$ is fixed (in the sense that it is not decided within problem \eqref{eq:LeaderProblem-Stochastic} but prior to it), the equilibrium problem given by \eqref{eq:LeaderProblem-Stochastic} becomes a generalized Nash equilibrium problem (GNEP), where all the complexity of the bilevel structure is captured by the beliefs. In this GNEP  the objectives of the players are no longer random, and we will call it a 
	\emph{bilevel game with belief system} $\{ \beta_i\ :i\in \mathfrak{I} \}$. 
	
	The \textit{Bayesian approach} consists in selecting a belief system, prior to the problem, that models the predictions of the leaders to the uncertain followers' reactions, and then to pose the problem as the bilevel game with the selected belief system. 
	
	\begin{remark}\label{rem:renaming}
		The renaming of the approach as Bayesian is motivated by several aspects: First, it encompasses the fact that leaders must form beliefs over the (unknown) behavior of the followers which fits the Bayesian interpretation of probabilities (see, e.g., \cite{wasserman2004all}). Secondly, these beliefs can be understood as priors, that could be refined after playing the game. While this is not treated in this work, the idea opens the possibility of ``learning'' the real behavior of the followers by updating the belief and observing outcomes of the game. 
		Finally, considering that the selection $y\in S(x)$ is unknown due to lack of information,  one might understand it as the type of the follower, following the ideas of Bayesian games \cite{harsanyi1967games}.    
	\end{remark}
	
	
	\subsection{Scope of the work}\label{subsec:Scope}
	The present work is 
	about existence of solutions of Problem \eqref{eq:LeaderProblem-Stochastic}.  
	The scope of this article is highlighted by the following assumptions.
	\paragraph{(A) We will consider only plain bilevel games} If the bilevel game is not plain, Problem \eqref{eq:LeaderProblem-Stochastic} is a GNEP. Positive results for existence of  
	equilibria for GNEPs are very few and they rely on either some generalized convexity condition or ideas coming from potential games. These conditions are generally not reasonable for bilevel games.
	Furthermore, up to the best of our knowledge, there is no good notion of mixed equilibrium for GNEP. Thus, we will restrain ourselves to the setting where Problem \eqref{eq:LeaderProblem-Stochastic} is a NEP, which are plain bilevel games (including general single-leader games).
	
	\paragraph{(B) We will assume that $S$ takes convex values} While this is a simplifying assumption, the cases where $S$ takes convex values encompasses several important settings such as the case where $S(x)$ is the solution of a parametric convex programming problem, or the case where $S(x)$ is the set of correlated equilibria of a Nash game with finite strategy sets  (see, e.g., \cite{Papadimitriou2007Complexity}). In particular, this is the setting of bilevel games where the lower-level is given by a parametric linear problem of the form $x\mapsto \min_y\{ \langle c,y\rangle\ :\ Ax + By \leq b \}$, which is our main motivation.
	
	\paragraph{(C) We will assume that the beliefs $\beta$ are nonatomic}  
	Our main results are focused on beliefs $\beta:x\in X\mapsto \beta_x \in \mathscr{P}(Y)$ that are absolutely continuous with respect to the (corresponding) Lebesgue measures. Specifically, we mean that for every $x\in X$ and every $A\subset S(x)$, 
	\[
	\lambda_{d_x}(A)=0\implies \beta_x(A) = 0,
	\]
	where $d_x=\adim(S(x))$ and $\lambda_{d_x}$ is the Lebesgue measure over $\aff(S(x))$. 
	
	An important belief that we shall consider in this setting (and that will lead our efforts in the following sections) is the one given by uniform distributions over the sets $S(x)$. This belief models the situation where the leader has no information over how the followers select their reaction, hence assigning equal density to each possible one. 
	
	\begin{definition}[Neutral belief] 
		\label{def:neutralbelief} Let $Y$ be a nonempty closed subset of $\R^p$ and let $S:X\tto Y$ be a set-valued map with nonempty convex compact values. The Neutral belief $\iota:X\to\mathscr{P}(Y)$ over $S$ is given by
		\[
		\iota_x(A) = \frac{\lambda_{d_x}(A\cap S(x))}{\lambda_{d_x}(S(x))},\quad A\in \mathcal{B}(Y),
		\]
		where ${d_x} = \adim(S(x))$ and $\lambda_{d_x}$ is the $d_{x}$-dimensional Lebesgue measure. 
	\end{definition}

	The Neutral approach is then given as the equilibrium problem \eqref{eq:LeaderProblem-Stochastic} where each leader minimizes the value function
	\begin{equation}
		\label{eq:LeaderProblem-neutral}
		\varphi_i
		(x_i,x_{-i}) =\E_{\iota_x}(\theta(x_i,x_{-i},\cdot))= \displaystyle \int_{S(x)}\frac{\theta_i(x_i,x_{-i},y)}{\lambda_{d_x}(S(x))}d\lambda_{d_x}(y).
	\end{equation}
	
	
	\section{Existence of solutions for the Bayesian approach}\label{sec:Existence}
	
	This section is devoted to study the existence of solutions for the Bayesian approach, based on sufficient conditions related to the reaction map $S$ and the belief $\beta$ over $S$. 
	

	\subsection{Existence results for weak continuous beliefs}\label{subsec:GeneralExistence}
	Here we will show two existence results for the problem defined by \eqref{eq:LeaderProblem-Stochastic} under the hypothesis that the 
	belief of each leader is weak continuous. 
	
	\begin{theorem}\label{thm:ExistenceSingleLeader} Let $Y$ be a nonempty compact set of $\R^p$ and let us consider a bilevel game given by \eqref{eq:LeaderProblem-Stochastic} with only one leader (i.e., $|\mathfrak{I}|=1$) with belief $\beta$ over $S:X\tto Y$. If one has that
		\begin{enumerate}
			\item[(i)]the cost function $\theta$ is lower semicontinuous,
			\item[(ii)] the constraints set $X$ is nonempty and compact, and
			\item [(iii)] the belief $\beta: X\to\mathscr{P}(Y)$ is weak continuous,
		\end{enumerate}
		then, $x\mapsto \E_{\beta_x}(\theta(x,\cdot))$ is lower semicontinuous and so, the (single leader) bilevel game admits at least one solution in the Bayesian approach.
	\end{theorem}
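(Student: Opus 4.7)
The plan is to establish lower semicontinuity of the reduced cost
$F(x) := \E_{\beta_x}(\theta(x,\cdot)) = \int_Y \theta(x,y)\,d\beta_x(y)$
on the compact set $X$, after which the classical Weierstrass theorem immediately produces a minimizer, which by definition of \eqref{eq:LeaderProblem-Stochastic} is a solution of the single-leader bilevel game.

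To show that $F$ is lsc, I would fix a sequence $x_n\to x$ in $X$. Hypothesis (iii) gives $\beta_{x_n}\xrightarrow{w}\beta_x$, and the goal is
\[
\liminf_{n\to\infty}\int_Y \theta(x_n,y)\,d\beta_{x_n}(y)\ \geq\ \int_Y \theta(x,y)\,d\beta_x(y).
\]
A direct appeal to Portemanteau (Theorem~\ref{thm:Portemanteau}, item~2) is not available since the integrand depends on $n$. The standard device to decouple the two dependencies is to approximate $\theta$ from below by continuous functions. Since $X\times Y$ is compact metric and $\theta$ is lsc and bounded from below there, one can write $\theta=\sup_k\varphi_k$ for an increasing sequence of bounded Lipschitz functions $\varphi_k\colon X\times Y\to\R$ (take, e.g., the infimal convolution $\varphi_k(z)=\inf_{z'}\{\theta(z')+k\|z-z'\|\}$). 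Each $\varphi_k$ is uniformly continuous on $X\times Y$, so $\varphi_k(x_n,\cdot)\to\varphi_k(x,\cdot)$ uniformly on $Y$; combined with $\beta_{x_n}\xrightarrow{w}\beta_x$, a straightforward $\epsilon/2$-splitting yields
\[
\lim_{n\to\infty}\int_Y \varphi_k(x_n,y)\,d\beta_{x_n}(y)\ =\ \int_Y \varphi_k(x,y)\,d\beta_x(y).
\]
Since $\theta\geq\varphi_k$ pointwise, this gives $\liminf_n F(x_n)\geq \int_Y \varphi_k(x,y)\,d\beta_x(y)$ for every $k$, and monotone convergence as $k\to\infty$ delivers $\liminf_n F(x_n)\geq F(x)$, as desired.

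Existence of a solution then follows at once from lower semicontinuity of $F$ on the nonempty compact set $X$ (hypothesis (ii)). The main technical obstacle is the first step: neither side of the target inequality has the form of a fixed-integrand integral against a weakly-convergent sequence, so Portemanteau has to be \emph{upgraded}. The monotone approximation of $\theta$ by uniformly continuous functions is precisely what allows the simultaneous passage to the limit in the $x$-argument and in the measure; once this technical upgrade is in place, the rest of the argument (monotone convergence and Weierstrass) is routine.
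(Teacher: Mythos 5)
Your argument is correct, and it follows the same overall strategy as the paper (minorize $\theta$ by better-behaved functions so that the $x$-dependence of the integrand and of the measure can be decoupled, then pass to the limit), but the technical route is genuinely different. The paper works locally at the limit point $x$: using lower semicontinuity and a finite cover of the compact $Y$, it builds an $\varepsilon$-dependent lower semicontinuous minorant $\varphi_\varepsilon$ of $\theta(x',\cdot)$ valid for all $x'$ in a neighborhood of $x$, applies the lsc form of the Portemanteau theorem (item 2 of Theorem~\ref{thm:Portemanteau}) to this fixed integrand, and then upgrades $\varepsilon\to 0$ via pointwise convergence of $\varphi_{1/j}$ to $\theta(x,\cdot)$ and the Fatou lemma. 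You instead regularize globally on $X\times Y$ by the Lipschitz (Pasch--Hausdorff/Moreau--Yosida) envelopes $\varphi_k$, exploit uniform continuity of each $\varphi_k$ to pass to the limit in both arguments simultaneously using only the continuous-bounded-test-function definition of weak convergence, and finish with monotone convergence. Your route buys a shorter and more modular proof that never needs the lsc Portemanteau item nor Fatou, at the price of invoking the standard facts that $\theta$, being lsc on the compact $X\times Y$, is bounded below (which you state a bit casually but which is exactly what the envelope construction and the monotone convergence step require) and that $\sup_k\varphi_k=\theta$ pointwise for lsc $\theta$; the paper's route is more hands-on and self-contained, relying only on tools it has already stated. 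Both arguments conclude identically with Weierstrass on the nonempty compact $X$.
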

	
	\begin{proof}
		We will only show the lower semicontinuity of $x\mapsto \E_{\beta_x}(\theta(x,\cdot))$ since the existence of solutions will trivially follow from the compactness of $X$. Let $(x^n)\subset X$ be a sequence converging to a point $x\in X$.
		
		Fix $\varepsilon>0$. Since $\theta$ is lower semicontinuous, for each $y\in Y$ there exists a neighborhood $U_y\times V_y$  
		of $(x,y)$ such that $\diam(V_{y})\leq \varepsilon$ and
		\[
		\forall (x',y')\in (U_y\times V_y) \cap (X\times Y),\quad \theta(x',y')> \theta(x,y)-\varepsilon.
		\]
		Without loss of generality, we may choose $U_{y}\times V_{y}$ to be closed. 
		Now, since $Y$ is compact, there exists a finite cover $\mathcal{V}_{\varepsilon}=\{V_{k}\}_{k=1}^m$ of $Y$, included in $\{V_{y}\ :\  y\in Y\}$. Let $\{y_{k}\}_{k=1}^m$ be a finite sequence such that $V_k=V_{y_k}$. 
		Consider the neighborhood of $x$ given by $U:=\bigcap_{k=1}^m U_{y_k}$. Note that $U$ satisfies
		\[
		\forall (x',y')\in (U\cap X)\times (V_{k}\cap Y),\quad  \theta(x',y')> \theta(x,y_k)-\varepsilon, 
		\]
		for each $k\in \{1,\ldots,m\}$. Let us define the function $\varphi_{\varepsilon}:Y\to \R$ given by
		\[
		\varphi_{\varepsilon}(y) = \min\{ \theta(x,y_k)-\varepsilon\ :\ y\in V_k\}. 
		\]
		
		By construction, $\varphi_{\varepsilon}$ is lower semicontinuous, bounded from below and satisfies $\varphi_{\varepsilon}\leq \theta(x',\cdot)$ for any $x'\in U$.
		In particular, for $n\in\N$ large enough $x^n\in U$, and hence, for every $y\in Y$, we have $\theta(x^n,y) \geq \varphi_{\varepsilon}(y)$. This yields, by the Portemanteau Theorem (see Theorem \ref{thm:Portemanteau}), that
		\begin{align*}
			\liminf_n \E_{\beta_{x^n}}(\theta(x^n,y)) &= \liminf_n\int_Y \theta(x^n,y) d\beta_{x^n}(y)\\
			&\geq \liminf_n\int_Y \varphi_{\varepsilon}(y)d\beta_{x^n}(y)\geq \int_Y \varphi_{\varepsilon}(y)d\beta_{x}(y).
		\end{align*}
		
		Now, let $\varepsilon_j = \frac{1}{j}$ and let $\varphi_j :=\varphi_{\varepsilon_j}$. We claim that $\varphi_j$ converges pointwise to $\theta(x,\cdot)$. Indeed, choose $y\in Y$. First, since $\varphi_j \leq \theta(x,\cdot)$ for every $j\in \N$, it is clear that $\limsup_j \varphi_j(y)\leq \theta(x,y)$. 
		
		By construction, for each $j\in \N $, there exists $y_j$ such that
		\[
		\varphi_j(y) = \theta(x,y_j)-1/j\quad\text{and}\quad\|y_j-y\|\leq 1/j.
		\]
		Then, we can write
		\begin{align*}
			\liminf_j \varphi_j(y) &= \liminf_j \left(\theta(x,y_j)-1/j\right)\geq \theta(x,y),
		\end{align*}
		where the last inequality follows from the fact that $\theta$ is lower semicontinuous and that $y_j\to y$. Thus, we have shown that
		\[
		\theta(x,y) \leq \liminf_j \varphi_j(y) \leq \limsup_j \varphi_j(y)\leq \theta(x,y),
		\]
		proving that $\varphi_j(y)\to \theta(x,y)$. and, by arbitrariness of $y\in Y$, proving our claim. 
		
		Now, since $\theta$ attains its minimum over $X\times Y$, we have that for every $j\in \N$, $\varphi_j \geq \min_{X\times Y} \theta -1$, and so, applying the Fatou Lemma for the measure $\beta_x$, we can write that
		\begin{align*}
			\liminf_n \E_{\beta_{x^n}}(\theta(x^n,\cdot)) &\geq \liminf_j \int_Y \varphi_j(y)d\beta_x(y)\\ 
			&\geq \int_Y \theta(x,y)d\beta_{x}(y) = \E_{\beta_{x}}(\theta(x,\cdot)).
		\end{align*}
		Since $(x^n)$ was an arbitrary sequence, we have shown the lower semicontinuity of the mapping $x\mapsto \E_{\beta_x}(\theta(x,\cdot))$, which finishes the proof. 
	\end{proof}

	\begin{remark}
		The weak continuity of the belief is used as a sufficient condition in Theorem \ref{thm:ExistenceSingleLeader} to prove the lower semicontinuity of the Bayesian  objective function of the leader given by
		\[
		x\mapsto \E_{\beta_x}(\theta(x,\cdot)).
		\]
		Conversely, if we require that the Bayesian objective function of the leader to be lower semicontinuous for any lower semicontinuous objective function $\theta$, then the belief must be weak continuous. This follows directly from part 2 of the Portemanteau Theorem by considering a lower semicontinuous function $f:Y\to\R$ and the leader's objective $\theta(x,y)=f(y)$. In this sense, weak continuity of the belief is also necessary.
	\end{remark}
	

	When multiple players are involved, the Bayesian approach suffers the same drawback as the rest of the approaches: it is a nonconvex generalized Nash equilibrium problem. While sufficient conditions to have existence of equilibria have been provided in the literature as generalized convexity conditions (see e.g. \cite{cotrina2020existence,bianchi2004equilibrium}), these are generally not satisfied in the context of bilevel games (see \cite[Example 4]{pang2005quasi}). Nevertheless, leaders can randomize their decisions in the setting of plain bilevel games. Then we can derive existence of mixed equilibria (introduced by Nash \cite{Nash1951} in the case of finite strategy sets, and extended to compact strategy sets one year later by Glicksberg \cite{Glicksberg1952}). In the next definition, we recall the notion of Mixed Equilibrium for compact strategy sets.
	
	\begin{definition}[Mixed equilibrium]\label{def:MixedEq} Let us consider a Nash Equilibrium problem  with $\mathfrak{I}=\{1,\ldots,N\}$ players, where each player $i\in \mathfrak{I}$ aims to solve
		\[
		\min_{x_i\in X_i} \varphi_i(x_i,x_{-i}),
		\]
		and all the objective functions $\varphi_i: \prod_{k=1}^N X_k\to \R$ are measurable and bounded from below. A mixed strategy of player $i\in \mathfrak{I}$ is a 
		probability measure $\nu_i \in \mathscr{P}(X_i)$. An $N$-tuple $(\nu_1,\ldots,\nu_N)$ in $\mathscr{P}(X_1)\times\cdots\times \mathscr{P}(X_N)$ is called a mixed equilibrium, if for each player $i\in \mathfrak{I}$,
		\[
		\nu_i\mbox{ solves } \min_{\nu\in \mathscr{P}(X_i)} \int\limits_{X_i}\int\limits_{X_{-i}} \varphi_i(x_i,x_{-i}) d\nu_{-i}(x_{-i}) d\nu(x_i),
		\]
		where $\nu_{-i}$ stands for the product measure $\displaystyle\bigotimes_{j\neq i}\nu_j$ on $X_{-i}$.
	\end{definition}
	
	The concept of mixed equilibria has been explored for bilevel games with continuous strategy sets in the setting of electricity auctions in \cite{pereda2019modeling}. Recent efforts have been made to provide methods to compute them for Nash equilibrium problems with continuous strategy sets \cite{dou2019finding}, but it is still a challenging research problem. 
	
	By considering the objective functions $\varphi_i(x)= \E_{\beta_x^i}(\theta_{i}(x,\cdot))$, we can derive the following direct theorem of existence of mixed equilibria for the Bayesian approach. The proof is similar to Theorem \ref{thm:ExistenceSingleLeader}, and the conclusion follows by applying Glicksberg Theorem \cite{Glicksberg1952}. 
	
	\begin{theorem}\label{thm:ExistenceMixedEq} Let $Y$ be a nonempty compact set of $\R^p$ and let us consider the bilevel game given by \eqref{eq:LeaderProblem}. Assume that this is a plain bilevel game, and consider the Bayesian approach \eqref{eq:LeaderProblem-Stochastic} with belief system $\{\beta^i\ : \ i\in \mathfrak{I}\}$ over $S:X \tto Y$. If for each leader $i\in \mathfrak{I}$ one has that
		\begin{enumerate}
			\item[(i)]the cost function $\theta_i$ is continuous,
			\item[(ii)] the global decision set $X_i$ is  nonempty and 
			compact,
			\item [(iii)] the belief $\beta^i:X\to\mathscr{P}(Y)$ is weak continuous,
		\end{enumerate}
		then, $x\mapsto \E_{\beta^i_x}(\theta_i(x,\cdot))$ is continuous for each $i\in \mathfrak{I}$, and so the bilevel game in the Bayesian approach admits a mixed equilibrium.
	\end{theorem}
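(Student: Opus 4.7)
The plan is to reduce the conclusion to Glicksberg's theorem, which requires that each payoff function $\varphi_i(x):=\E_{\beta^i_x}(\theta_i(x,\cdot))$ be jointly continuous in $x\in X=\prod_{j\in\mathfrak{I}}X_j$ and that each strategy set $X_i$ be nonempty and compact. Compactness of the product $X$ follows from finiteness of $\mathfrak{I}$, so the only substantive step is to upgrade the lower semicontinuity produced by Theorem \ref{thm:ExistenceSingleLeader} to full continuity.

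First I would reuse the argument of Theorem \ref{thm:ExistenceSingleLeader} verbatim: that proof only depends on the lower semicontinuity of the cost and the weak continuity of the belief over $S$, and it produces lower semicontinuity of the induced value function on $X$, without ever separating the leader's own coordinate from the rest. Applied to $\theta_i$, which is continuous and hence lower semicontinuous, together with the weak continuous belief $\beta^i$, this yields that $\varphi_i$ is lower semicontinuous on $X$.

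To obtain upper semicontinuity I would apply the very same argument to the function $-\theta_i$, which is also continuous (in particular lower semicontinuous). The conclusion of that second application is that the map $x\mapsto\E_{\beta^i_x}(-\theta_i(x,\cdot))=-\varphi_i(x)$ is lower semicontinuous, which is equivalent to $\varphi_i$ being upper semicontinuous. Combining the two inequalities yields joint continuity of $\varphi_i$ on $X$ for every $i\in\mathfrak{I}$.

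With continuous payoffs $\varphi_i$ on nonempty compact strategy sets $X_i$ in place, the existence of a mixed equilibrium in the sense of Definition \ref{def:MixedEq} is a direct consequence of Glicksberg's theorem \cite{Glicksberg1952}, applied to the Nash equilibrium problem obtained by replacing each $\theta_i$ by its expected value $\varphi_i$. The main obstacle is essentially bookkeeping: the real analytical work was already carried out in the proof of Theorem \ref{thm:ExistenceSingleLeader}, and here one simply doubles its use (on $\theta_i$ and on $-\theta_i$) in order to reach the classical hypotheses of a continuous game on compact strategy spaces.
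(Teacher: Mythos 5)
Your proposal is correct and follows essentially the same route as the paper, which only sketches this proof as ``similar to Theorem \ref{thm:ExistenceSingleLeader}, then apply Glicksberg'': you make the sketch precise by running the lower-semicontinuity argument of Theorem \ref{thm:ExistenceSingleLeader} on both $\theta_i$ and $-\theta_i$ (both legitimate since $\theta_i$ is continuous and bounded on the compact set $X\times Y$, so the expectations are finite and the sign can be pulled out) to obtain continuity of each $\varphi_i$. The invocation of Glicksberg's theorem on the nonempty compact strategy sets $X_i$ with these continuous payoffs is exactly the paper's intended conclusion.
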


	\begin{proof}
		Assuming that the bilevel game is plain, Problem \eqref{eq:LeaderProblem-Stochastic} can be rewritten as
		\begin{equation}\label{eq:LeaderProblem-Stochastic-reduction}
			P_i(x_{-i},\beta^i) = \begin{cases}
				\displaystyle\min_{x_i}\,\, &\E_{\beta^i_x}(\theta_i(x_i,x_{-i},y))\\
				\mbox{s.t.}\quad &x_i \in X_i.
			\end{cases}
		\end{equation}
		Here, each belief $\beta_i:X\to \mathscr{P}(Y)$ is well-defined due to the fact that in plain bilevel games we have that $X\subset \dom(S)$. Now, using the same argument as in the proof of Theorem \ref{thm:ExistenceSingleLeader} we can show that for each $i$ both $x\mapsto \E_{\beta^i}(\theta_i(x,\cdot))$ and $x\mapsto \E_{\beta^i}(-\theta_i(x,\cdot)) = -\E_{\beta^i}(\theta_i(x,\cdot))$ are both lower semicontinuous. Thus $x\mapsto \E_{\beta^i}(\theta_i(x,\cdot))$ is continuous and we conclude thanks to the Glicksberg Theorem \cite{Glicksberg1952}. 
	\end{proof}
	
	Note that in general, any selection $\beta:x\in X\mapsto \beta_x\in\mathscr{P}(S(x))$  constitutes a belief (for plain bilevel games; otherwise we replace $X$ by $D =\dom S$), and no other condition 
	is needed to define Problem \eqref{eq:LeaderProblem-Stochastic}. However, both existence results require the involved beliefs to be weak continuous. This is a strong property for a belief $\beta$ and, in particular, it entails that $\beta:X\times\mathcal{B}(Y)\to \R_+$ must be a Markov kernel (see, e.g., \cite[Definition 8.25]{klenke2013probability}) in the sense that\smallskip

	\begin{itemize}
		\item For each $x\in X$, $\beta(x,\cdot)$ is a Borel probability measure over $Y$; and
		\item For each $A\in\mathcal{B}(Y)$, $\beta(\cdot, A)$ is a $\mathcal{B}(X)$-measurable map.\smallskip
	\end{itemize}
	
	The first condition is trivially verified by definition, and the second one follows as a consequence of weak continuity: Indeed, it is enough to consider the maps $f = \ind_A$ with $A$ an open subset of $Y$ (which is a $\pi$-system of $\mathcal{B}(Y)$), and deduce from Theorem \ref{thm:Portemanteau} that the mappings $x\mapsto \beta(x,A)$ are lower semicontinuous, and therefore measurable (then, the desired conclusion follows from, e.g., \cite[Remark 2.6]{klenke2013probability}).
	While being a Markov kernel ensures several nice properties for a belief such as the measurability of $x\mapsto \E_{\beta_x}[\theta(x,\cdot)]$ and the well-posedness of mixed strategies (see Definition \ref{def:MixedEq}), it is not enough to guarantee existence of solutions for the Bayesian approach, as the following (classic) example shows.
	\begin{example}
		Consider $X=Y=[0,1]$ and $S:X\tto Y$ given by 
		\[
		S(x) = \begin{cases}
			\{0\}\qquad&\text{ if }x<1/2,\\
			[0,1]\qquad&\text{ if }x=1/2,\\
			\{1\}\qquad&\text{ if }x>1/2.
		\end{cases}
		\]
		Then, any belief $\beta:X\to \mathscr{P}(Y)$ over $S$ must have the form
		\[
		\beta(x) = \begin{cases}
			\delta_0\qquad&\text{ if }x<1/2,\\
			\mu\qquad&\text{ if }x=1/2,\\
			\delta_1\qquad&\text{ if }x>1/2.
		\end{cases}
		\]
		for some probability measure $\mu \in \mathscr{P}(Y)$, where $\delta_y \in \mathscr{P}(Y)$ stands for the delta measure at point $y\in Y$. In this case, all beliefs are Markov kernels, but none of them are weak continuous. 
	\end{example}

	\begin{remark}
		It is worth to note that the probability distributions obtained in a mixed strategy describe how leaders randomize their decision process. Following the two-stage interpretation of bilevel games, they describe how the random decision vector $x$ is produced in a first stage. In contrast, the beliefs model how the reaction $y\in S(x)$ might be produced in a second stage, as a function of a decision $x$ that has been already produced.
	\end{remark}
	
	\subsection{Rectangular continuity of set-valued maps}\label{subsec:RecContinuity}
	
	Most common probability distributions used in applications are derived from the uniform distribution (through a continuous density with respect to the Lebesgue measure). In our context, this core distribution is given by the Neutral belief (recall Definition \ref{def:neutralbelief}). Thus, we want to find sufficient conditions on the reaction map $S$, such that at least the Neutral belief is weak continuous. We will motivate the conditions with a simple example that shows usual continuity is not enough, by analyzing the centroid of the map.
	In fact, the conditions should guarantee that the centroid mapping $x\mapsto \centroid(S(x))$ is continuous, as the next lemma shows.
	
	\begin{lemma} Let $Y$ be a nonempty closed subset of $\R^p$, and $S:X\tto Y$ be a continuous set-valued map with nonempty convex compact values. If the Neutral belief $\iota:X\to \mathscr{P}(Y)$ over $S$ is weak continuous, then the centroid map $x\mapsto \centroid({S(x)})$ is continuous.
		\label{lem:centroidcontinuous}
	\end{lemma}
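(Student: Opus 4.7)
The plan is to identify each coordinate of the centroid with the expectation of a coordinate projection against the neutral belief, and then to invoke weak continuity componentwise. For $j\in\{1,\ldots,p\}$ let $\pi_j\colon Y\to\R$ denote the $j$-th coordinate projection. Since $\iota_x$ is, by construction, the normalized $d_x$-dimensional Lebesgue measure restricted to $S(x)$, the definition of the centroid immediately gives
\[
\int_Y \pi_j(y)\, d\iota_x(y) \;=\; \frac{1}{\lambda_{d_x}(S(x))}\int_{S(x)} y_j\, d\lambda_{d_x}(y) \;=\; \bigl(\centroid(S(x))\bigr)_j,
\]
so it suffices to prove that each of the scalar maps $x\mapsto \int_Y \pi_j\, d\iota_x$ is continuous at an arbitrary $x_0\in X$.

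The first step is to reduce to test functions of bounded support, as required by the paper's definition of weak convergence. Since $S$ is continuous with nonempty compact values it is, in particular, upper semicontinuous, so there exist a neighborhood $U$ of $x_0$ and a compact set $K\subset Y$ such that $S(x)\subset K$ for every $x\in U$. I would fix a continuous cutoff $\chi\colon Y\to [0,1]$ of compact support with $\chi\equiv 1$ on $K$ and set $\tilde\pi_j := \pi_j\chi$. Since $\iota_x(S(x))=1$ and $S(x)\subset K$ for all $x\in U$, one gets the identity $\int_Y \pi_j\, d\iota_x = \int_Y \tilde\pi_j\, d\iota_x$ on $U$. If one works in the global compact setting $Y$ assumed by default in the paper, this reduction is superfluous: one simply takes $\tilde\pi_j = \pi_j$.

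The second step is then immediate. The function $\tilde\pi_j$ is continuous with bounded support, so the weak continuity hypothesis on $\iota$, applied to any sequence $x_n\to x_0$ in $U$, yields
\[
\int_Y \tilde\pi_j\, d\iota_{x_n}\to \int_Y \tilde\pi_j\, d\iota_{x_0},
\]
and hence $\bigl(\centroid(S(x_n))\bigr)_j \to \bigl(\centroid(S(x_0))\bigr)_j$. Doing this for each $j\in\{1,\ldots,p\}$ gives continuity of $x\mapsto \centroid(S(x))$ at $x_0$, and by arbitrariness of $x_0$ the centroid map is continuous on $X$. The only mildly delicate ingredient is the cutoff localization, which is a routine consequence of upper semicontinuity of $S$ and local compactness of $Y\subset\R^p$; no substantial obstacle is expected.
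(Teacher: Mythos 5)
Your proof is correct and follows essentially the same route as the paper's: write each coordinate of $\centroid(S(x))$ as $\int_Y \langle e_j,y\rangle\,d\iota_x(y)$ and conclude componentwise from weak continuity of $\iota$ (the paper invokes the Portemanteau theorem for the coordinate functionals). Your cutoff localization via upper semicontinuity of $S$ is a harmless extra precaution for the case of unbounded $Y$, but does not change the argument.
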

	\begin{proof}
		Fix $x\in X$. By definition, the centroid of $S(x)$ is given by the vector integral
		\[
		\centroid(S(x)) = 
		\sum_{k=1}^{p} \frac{1}{\lambda_{d_{x}}(S(x))}\left( \int_{S(x)} \langle e_k,y\rangle d\lambda_{d_{x}}(y) \right)e_k,
		\]
		where $e_k$ is the $k$th canonical vector of $\R^p$. Since for each $k\in \{1,\ldots,p\}$ the functional $\langle e_k,\cdot\rangle$ is continuous, then by Theorem \ref{thm:Portemanteau} the weak continuity of $\iota$ yields that the mapping $x\mapsto \centroid(S(x))$ is continuous. 
	\end{proof}
	
	A first guess of sufficient conditions for the weak continuity of the Neutral belief could be that the reaction map $S$ is continuous with convex values. However, in view of Lemma \ref{lem:centroidcontinuous}, the following example reveals that it is not so.
	
	\begin{example}\label{ex:triangletosegment}
		Let us consider the set-valued map $S:[0,1]\rightrightarrows [0,1]^2$ defined by 
		\[
		S(x)=\{(y_1,y_2)\in[0,1]^2: y_1\leq x\cdot y_2 \}
		\]
		which is clearly continuous and convex valued. We will call this map $S$ the triangle-to-segment map.
		Clearly, the centroid of $S(x)$ is $(\frac{x}{3},\frac{2}{3})$ for $x\in(0,1]$, while the centroid of $S(0)$ is $(0,\frac{1}{2})$. We see thus that the centroid mapping $x\mapsto \centroid(S(x))$ is not continuous at $x=0$. 
		In contrast, consider the map $\tilde{S}:\R\tto \R^2$ defined by $$\tilde{S}(x)=[0,1]\times [0,x]$$ which we call the rectangle-to-segment map. Clearly, $\tilde{S}$ satisfies that its centroid, given by $(1/2,x/2)$, is continuous. See Figure \ref{fig:triangletosement}.
	\end{example}
	\begin{figure}[h]
		\centering
		\begin{subfigure}{0.45\textwidth}
			\centering
			\begin{tikzpicture}[scale=0.8]
				\draw (0,0) -- (3,0) -- (3,1) --(0,0);
				\node at (3.7,.5) {\small $S(.4)$};
				\filldraw[black] (2,0.333) circle (1pt) node[anchor=west,above] {$\centroid$};
				\draw (0,0-1.5) -- (3,0-1.5) -- (3,.5-1.5) --     (0,0-1.5);
				\node at (3.7,.25-1.5) {$S(.2)$};
				\filldraw[black] (2,0.166-1.5) circle (1pt) node[anchor=west,above] {$\centroid$};
				\draw (0,0-3) -- (3,0-3);
				\node at (3.7,0-3) {$S(0)$};
				\filldraw[black] (1.5,0-3) circle (1pt) node[anchor=west,above right] {$\centroid$};
				\draw[dashed,gray] (2,.5) -- (2,0-3);
			\end{tikzpicture}
			\caption{Centroid of $S(x)$ is not continuous}
		\end{subfigure}
		\begin{subfigure}{0.45\textwidth}
			\centering
			\begin{tikzpicture}[scale=0.8]
				\draw (0,0) -- (3,0) -- (3,1) -- (0,1) --(0,0);
				\node at (3.7,.5) {\small $\tilde{S}(.4)$};
				\filldraw[black] (1.5,.5) circle (1pt) node[anchor=west,right] {$\centroid$};
				\draw (0,0-1.5) -- (3,0-1.5) -- (3,.5-1.5) --   (0,.5-1.5) --  (0,0-1.5);
				\node at (3.7,.5-1.75) {$\tilde{S}(.2)$};
				\filldraw[black] (1.5,0.25-1.5) circle (1pt) node[anchor=west,right] {$\centroid$};
				\draw (0,0-3) -- (3,0-3);
				\node at (3.7,0-3) {$\tilde{S}(0)$};
				\filldraw[black] (1.5,0-3) circle (1pt) node[anchor=west,above right] {$\centroid$};
				\draw[dashed,gray] (1.5,.5) -- (1.5,0-3);
			\end{tikzpicture}
			\caption{Centroid of $\tilde{S}(x)$ is continuous}
		\end{subfigure}
		
		\caption{Centroids of continuous set valued maps $S$ and $\tilde{S}$ (denoted by $\centroid$) in Example \ref{ex:triangletosegment} for different values of $x$. }
		\label{fig:triangletosement}
	\end{figure}
	

	The main problem with the triangle-to-segment map $S$ in the above example is that for all $x\in (0,1]$, the set $S(x)$ is not ``balanced", concentrating half of the ``mass" of the triangle in the third right part of it. In contrast, the limit set $S(0)$ is balanced in the sense that its ``mass" is equally distributed in both halves. Continuity of $S$ is not enough to prevent this abrupt change in ``mass" distribution. Motivated by the rectangle-to-segment map $\tilde{S}$, we introduce a stronger notion of continuity of set-valued maps, that ensures that changes of dimension may happen but in a ``balanced way''. As we will see in Section \ref{subsec:ConstructionBeliefs}, this notion is sufficient to ensure weak continuity not only for the Neutral belief, but also for a large family of beliefs built from continuous densities. The idea is to use an inner and an outer approximation of $S(x)$ by ``rectangles'' whose volume ratio goes to 1.
	
	
	\begin{definition}(Rectangular continuity) \label{def:rectangularcontinuity}
		Let $X\subset\R^d$ nonempty and $Y \subset \R^p$ nonempty and closed. Let  $S:X\tto Y$ be  a set-valued map with nonempty, bounded values. Let 
		$d_x = \adim (S(x))$ for each $x\in X$, and let $\bar{x}\in X$. We say that $S$ is \emph{rectangularly continuous} at $\bar{x}$ if there exist maps $T_0,T_1,R_0,R_1:X\tto Y$ such that 
		\begin{enumerate}
			\item For every $x$ near $\bar{x}$,
			\begin{equation}\label{eq:sandwich}
				T_0(x)+R_0(x)\subset S(x)\subset T_1(x)+R_1(x).
			\end{equation}
			\item For $j=0,1$, $R_j(x)\subset \spn(T_j(x)-T_j(x))^{\perp}\cap\spn(S(x)-S(x))$, and $T_j$ has constant affine dimension equal to $d_{\bar{x}}$.
			\item For $j=0,1$, 
			\begin{equation} \label{eq:limits}
				\Lim_{x\to\bar{x}} R_j(x)=\{0\} \text{ and }\Lim_{x\to\bar{x}}T_j(x)=S(\bar{x}).
			\end{equation}
			\item The maps $R_0$ and $R_1$ have measurable values and verify that 
			\begin{equation}\label{eq:ProportionOrthogonalPart}
				\lim_{x\to\bar{x}} \frac{\lambda_{d_x-d_{\bar{x}}}(R_0(x))}{\lambda_{d_x-d_{\bar{x}}}(R_1(x))}=1.
			\end{equation}
		\end{enumerate}
		We say that $S$ is \emph{rectangularly continuous} on $X$ if it is so at each point $\bar{x}\in X$.
	\end{definition}
	

	We propose the name of rectangular continuity since the inclusions in \eqref{eq:sandwich} enclose $S(x)$ between a rectangular-sum of sets: the ``horizontal parts" $T_0(x)$ and $T_1(x)$ converge to $S(\bar{x})$, while the ``vertical parts" $R_0(x)$ and $R_1(x)$ converge to $\{0\}$, but keeping the same proportion as stated by equation \eqref{eq:ProportionOrthogonalPart}. In the above definition, equation \eqref{eq:ProportionOrthogonalPart} is equivalent to
	\begin{equation*}
		\lim_{x\to\bar{x}} \frac{\lambda_{d_x}(R_0(x)+T_0(x))}{\lambda_{d_x}(R_1(x)+T_1(x))}=1.
	\end{equation*}

	
	Rectangular continuity at $\bar{x}\in X\subset\R^d$ implies continuity at $\bar{x}$. This follows directly from \eqref{eq:limits} and the Sandwich inclusion in \eqref{eq:sandwich}. The reciprocal implication is not always true (Example \ref{ex:triangletosegment} fails to verify rectangular continuity at $\bar{x}=0$). The following proposition provides some direct cases where rectangular continuity is verified.
	
	
	\begin{proposition}\label{prop:ContinuityWeakThanRectangular}
		Let $Y$ be a nonempty compact subset of $\R^p$, and let $S:X\tto Y$ be a set-valued map with nonempty closed values, that is continuous at $\bar{x}\in X$. Assume that one of the following conditions hold:
		\begin{enumerate}
			\item $S(\bar{x})$ is a singleton.
			\item $S$ is of constant affine dimension near $\bar{x}$.
			\item $S$ is convex-valued and $S(\bar{x})$ has nonempty interior.
			\item $S$ is convex-valued with $Y\subset \R$, that is, if $p=1$.
		\end{enumerate}
		Then, $S$ is rectangularly continuous at $\bar{x}$.
	\end{proposition}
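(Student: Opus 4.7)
The plan is to treat the four cases separately, with explicit constructions of the sandwich quadruple $(T_0, T_1, R_0, R_1)$. In each case I will arrange either $T_0 = T_1$ or $R_0 = R_1$ (or both), which trivializes the ratio condition \eqref{eq:ProportionOrthogonalPart} and reduces the verification to the Kuratowski limit conditions \eqref{eq:limits}. Cases 1 and 2 admit direct constructions, while cases 3 and 4 reduce to these.

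Case 2 is immediate: with $d_x \equiv d_{\bar{x}}$ near $\bar{x}$, set $T_0(x) = T_1(x) = S(x)$ and $R_0(x) = R_1(x) = \{0\}$; the sandwich \eqref{eq:sandwich} holds with equality, $T_j$ has constant affine dimension $d_{\bar{x}}$ by hypothesis, continuity of $S$ gives $\Lim T_j(x) = S(\bar{x})$, and $\lambda_0(\{0\})/\lambda_0(\{0\}) = 1$. For case 1, where $S(\bar{x}) = \{\bar{y}\}$, I would pick any selection $t(x) \in S(x)$ for $x$ in a neighborhood of $\bar{x}$; outer semicontinuity of $S$ combined with $S(\bar{x}) = \{\bar{y}\}$ forces $t(x) \to \bar{y}$ inside the compact $Y$. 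Then take $T_0(x) = T_1(x) = \{t(x)\}$ and $R_0(x) = R_1(x) = S(x) - t(x)$: the sandwich holds with equality, $T_j$ has constant affine dimension $0 = d_{\bar{x}}$, the orthogonality part of condition 2 is vacuous since $\spn(T_j(x) - T_j(x))^\perp = \R^p$, the inclusion $R_j(x) \subset \spn(S(x) - S(x))$ is automatic, $R_j(x)$ is Borel as a closed set, $\Lim R_j(x) = \{0\}$ follows from $S(x) \to \{\bar{y}\}$ and $t(x) \to \bar{y}$, and the ratio in \eqref{eq:ProportionOrthogonalPart} is identically $1$ because $R_0 = R_1$.

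For case 3 I would show that $d_x = p = d_{\bar{x}}$ for all $x$ in a neighborhood of $\bar{x}$ and then invoke case 2. The argument is: choose $p+1$ affinely independent points $y_0, \ldots, y_p$ in $\into S(\bar{x})$; inner semicontinuity of $S$ at $\bar{x}$ yields selections $y_i(x) \in S(x)$ with $y_i(x) \to y_i$; for $x$ sufficiently close to $\bar{x}$ these points remain affinely independent (an open condition on the corresponding determinant), and convexity of $S(x)$ gives $\co\{y_0(x), \ldots, y_p(x)\} \subset S(x)$, a $p$-dimensional simplex. For case 4 ($p = 1$ and $S$ convex-valued), $S(x)$ is an interval of $\R$, so either $S(\bar{x})$ is a singleton and case 1 applies, or $S(\bar{x})$ has nonempty interior in $\R$ and case 3 applies. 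The only substantive step is the dimension-persistence argument in case 3; the remainder of the proof is a direct unpacking of Definition \ref{def:rectangularcontinuity}, and I do not foresee a serious obstacle.
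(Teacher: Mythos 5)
Your proposal is correct and follows essentially the same route as the paper: the same choices $T_j(x)=\{t(x)\}$, $R_j(x)=S(x)-t(x)$ for the singleton case and $T_j(x)=S(x)$, $R_j(x)=\{0\}$ for constant affine dimension, with cases 3 and 4 reduced to these via the full-dimensionality (respectively singleton/full-dimension dichotomy) argument. The only difference is that you spell out the dimension-persistence step in case 3 (affinely independent points obtained from lower semicontinuity), which the paper merely asserts.
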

	\begin{proof}
		The first assertion is verified by taking, for $j=0,1$, $T_j(x)$ as a single-valued selection of $S$ and $R_j(x)=S(x)-T_j(x)$. The second one comes from taking $T_j(x)=S(x)$ and $R_j(x)=\{0\}$, for $j=0,1$. The third one is a consequence of the second one, since in this case $S(x)$ has full affine dimension for all $x$ near $\bar{x}$. The fourth case is a consequence of the first and the third cases, since when $p=1$, the lower semicontinuity of $S$ yields either that $S(\bar{x})$ is a singleton, or that $S(x)$ has full affine dimension near $\bar{x}$. 
	\end{proof}
	

	It is important to observe that rectangular continuity really goes beyond the cases described in Proposition \ref{prop:ContinuityWeakThanRectangular} as we shall see in Section \ref{subsec:linearrectangular}. 
	The following example provides a nontrivial set-valued map $S$ which is rectangularly continuous precisely because the changes of affine dimension can be ``controled'' by rectangular decomposition.
	
	\begin{example} Let us consider the set-valued map $S:\R^n_+\tto \R^n_+$ given by
		\[
		S(x) = \left\{ y\in \R^n_+\, \Big | \begin{array}{c}
			y_i \leq x_i,\,\,\forall i=1,\ldots,n,  \\
			\displaystyle\sum y_i^2\leq \max_{j=1,\ldots,n} x_j^2.  
		\end{array}  \right\}.
		\]
		Clearly $\adim(S(x)) = |\mathrm{supp}(x)|$ where $\supp(x) = \{ i\ :\ x_i > 0  \}$ is the support of $x$. Thus, the affine dimension of $S(x)$ ranges from $0$ to $n$ having points where $\adim(S(x)) = k$ for any integer $k$ in between. Figure \ref{fig:exampleAllDimensions} illustrates $S$ when $n = 2$.

		\begin{figure}[h]
			\begin{center}
				\begin{tikzpicture} [x=1cm,y=1cm,scale=0.65]
					\coordinate (O1) at (0-2,0);
					
					\coordinate (X1) at (2-2,0);
					\coordinate (Y1) at (0-2,2);
					
					\draw[->] (O1) -- (X1);
					\draw[->] (O1) -- (Y1);
					\node at (1.5-2,1.5) {\scriptsize $S(2,1)$};
					\filldraw[fill=gray!30!white,line width=0.5mm] (O1) -- (1.5-2,0) arc[start angle=0, end angle=30,radius=1.5cm]--(0-2,0.75) -- (O1);
					\coordinate (Oex) at (4+2-4,0);
					\coordinate (Xex) at (6+2-4,0);
					\coordinate (Yex) at (4+2-4,2);
					
					\draw[->] (Oex) -- (Xex);
					\draw[->] (Oex) -- (Yex);
					\node at (4+1.5+2-4,1.5) {\scriptsize $S(2,2)$};
					\filldraw[fill=gray!30!white,line width=0.5mm] (Oex) -- (4+2-4,1.5) arc[start angle=90, end angle=0,radius=1.5cm] -- (Oex);
					
					\coordinate (O2) at (4+2,0);
					\coordinate (X2) at (6+2,0);
					\coordinate (Y2) at (4+2,2);
					
					\draw[->] (O2) -- (X2);
					\draw[->] (O2) -- (Y2);
					\node at (4+1.5+2,1.5) {\scriptsize $S(1,2)$};
					\filldraw[fill=gray!30!white,line width=0.5mm] (O2) -- (4+2,1.5) arc[start angle=90, end angle=60,radius=1.5cm]--(4+0.75+2,0) -- (O2);
					
					\coordinate (O3) at (2-4,-3);
					\coordinate (X3) at (4-4,-3);
					\coordinate (Y3) at (2-4,-1);
					
					\draw[->] (O3) -- (X3);
					\draw[->] (O3) -- (Y3);
					\node at (2-4+1,-2) {\scriptsize $S(2,0)$};
					\filldraw[fill=gray!30!white,line width=0.5mm] (O3) -- (2-4+1.5,-3);
					
					\coordinate (O4) at (2,-3);
					\coordinate (X4) at (4,-3);
					\coordinate (Y4) at (2,-1);
					
					\draw[->] (O4) -- (X4);
					\draw[->] (O4) -- (Y4);
					\node at (2+1,-2) {\scriptsize $S(0,2)$};
					\filldraw[fill=gray!30!white,line width=0.5mm] (O4) -- (2,-3+1.5);
					
					\coordinate (O5) at (2+4,-3);
					\coordinate (X5) at (4+4,-3);
					\coordinate (Y5) at (2+4,-1);
					
					\draw[->] (O5) -- (X5);
					\draw[->] (O5) -- (Y5);
					\node at (2+4+1,-2) {\scriptsize $S(0,0)$};
					\filldraw (O5) circle (0.1cm);
				\end{tikzpicture}
			\end{center}
			\caption{Values of $S(x)$ of different affine dimension when $n=2$.}
			\label{fig:exampleAllDimensions}
		\end{figure}
		Even though $S$ does not fit any of the cases of Proposition \ref{prop:ContinuityWeakThanRectangular}, we will see that it is rectangularly continuous as a particular case of Theorem \ref{thm:RecContinuityWeakAnalytic} in Section \ref{subsec:linearrectangular}.	
	\end{example}
	
	
	\subsection{Construction of weak continuous beliefs through densities}\label{subsec:ConstructionBeliefs}
	
	This section is devoted to show that rectangular continuity is sufficient to ensure weak continuity of certain beliefs that have a ``density" with respect to Lebesgue measures in the corresponding affine subspaces.
	
	\begin{definition}[Beliefs with density function]\label{def:ConditionalBelief} Let $Y$ be a nonempty closed subset of $\R^p$ and $S:X\tto Y$ be a set-valued map with nonempty closed values. A belief $\beta:X\to \mathscr{P}(Y)$ over $S$ is said to admit a density function with respect to the neutral belief (or simply to admit a density),
		if there exists a function $\rho: X\times Y\to [0,+\infty)$ such that
		\begin{enumerate}
			\item For all $x\in X$, $\rho(x,\cdot)$ restricted to $S(x)$ is $\lambda_{d_x}$-measurable with 
			\[
			0<\int_{S(x)}\rho(x,y)d\lambda_{d_x}(y) <+\infty;
			\]
			
			\item And, for all $A\in \mathcal{B}(Y)$
			\begin{equation}\label{eq:DefConditionalBelief}
				\beta_x(A) = \frac{\int_{A\cap S(x)} \rho(x,y) d\lambda_{d_x}(y)}{\int_{S(x)} \rho(x,y) d\lambda_{d_x}(y)},
			\end{equation}
		\end{enumerate}
		where $d_x = \adim(S(x))$. In such a case, we call $\rho$ a density function of $\beta$.
	\end{definition}
	
	The name \emph{density function} is inspired in Markov kernels with densities (see, e.g., \cite[Definition 1.2.4]{Bakry2013Markov}), which are kernels $\beta:X\times Y\to \R_+$ where each measure $\beta(x,\cdot)$ can be written as
	\[
	\beta(x,A) = \int_A \rho(x,y)dm(y),
	\]
	for some fixed measure $m\in \mathscr{P}(Y)$. However, our definition is a little different, since the density function is integrated with respect to the Lebesgue measure $\lambda_{d_x}$ associated to the moving set $S(x)$, instead of the (fixed) Lebesgue measure $\lambda$ of $Y$. Note that the density function of a belief is not uniquely defined. However, if a belief $\beta$ admits a density function $\rho$, then 
	\begin{equation}\label{eq:RadonNikodym-kernel}
		\frac{1}{\int_{S(x)}\rho(x,y)d\lambda_{x}(y)}\rho(x,\cdot) = \frac{\partial \beta_x}{\partial \lambda_x},
	\end{equation}
	where $\frac{\partial \beta_x}{\partial \lambda_{d_x}}$ is the Radon-Nikodym derivative of $\beta_x$ with respect to $\lambda_{d_x}$, over the measurable space $(S(x),\mathcal{B}(X))$. It is worth noting that one could consider densities with respect to other beliefs, just by replacing the role of the Neutral belief in Definition \ref{def:ConditionalBelief}.
	
	
	We start our development with the following lemma that links the continuity of a set-valued map $S$ with the continuity of a measure evaluated in the images of $S$.
	
	\begin{lemma}\label{lemma:continuityofmesure}
		Let $Y$ be a nonempty closed subset of $\R^p$ and let $\mu$ be a finite Borel measure on $Y$. Let $S:X\tto Y$ be a set-valued map with measurable values and let $\bar{x}\in X$. The following assertions hold: 
		\begin{enumerate}
			\item If $S$ is upper semicontinuous at $\bar{x}$, then $\mu(S(\cdot))$ is upper semicontinuous at $\bar{x}$.
			\item If $S$ is lower semicontinuous at $\bar{x}$, with compact convex values near $\bar{x}$, and $\mu(\partial S(\bar{x}))=0$, then $\mu(S(\cdot))$ is lower semicontinuous at $\bar{x}$. 
		\end{enumerate}
	\end{lemma}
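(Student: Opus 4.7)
The plan is to exploit outer regularity of the finite Borel measure $\mu$. Since $Y$ is closed in $\R^p$, $\mu$ is Radon, so for every $\varepsilon>0$ there is an open set $U\supset S(\bar{x})$ in $Y$ with $\mu(U)<\mu(S(\bar{x}))+\varepsilon$. The upper semicontinuity of $S$ at $\bar{x}$ then yields a neighborhood $V$ of $\bar{x}$ such that $S(x)\subset U$ for every $x\in V$, hence
\[
\mu(S(x))\leq\mu(U)<\mu(S(\bar{x}))+\varepsilon\qquad\forall x\in V,
\]
which establishes upper semicontinuity of $x\mapsto\mu(S(x))$ at $\bar{x}$.

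\textbf{Part (2).} I first dispose of the degenerate case $\into(S(\bar{x}))=\emptyset$: since $S(\bar{x})$ is closed, this forces $\partial S(\bar{x})=S(\bar{x})$, so the hypothesis $\mu(\partial S(\bar{x}))=0$ gives $\mu(S(\bar{x}))=0$, and lower semicontinuity of $\mu(S(\cdot))$ at $\bar{x}$ is automatic. When $\into(S(\bar{x}))\neq\emptyset$, the identity $\mu(S(\bar{x}))=\mu(\into(S(\bar{x})))$ (again from $\mu(\partial S(\bar{x}))=0$), combined with inner regularity of $\mu$, provides a compact set $K\subset\into(S(\bar{x}))$ with $\mu(K)\geq\mu(S(\bar{x}))-\varepsilon$. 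The whole proof then reduces to the auxiliary claim: there exists a neighborhood $V$ of $\bar{x}$ such that $K\subset S(x)$ for all $x\in V$. Granted this, $\mu(S(x))\geq\mu(K)\geq\mu(S(\bar{x}))-\varepsilon$ on $V$.

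\textbf{Main obstacle: proving the auxiliary claim.} Set $r:=\tfrac{1}{2}\dist(K,Y\setminus\into(S(\bar{x})))>0$, and cover the compact set $K$ by finitely many open balls $B(k_j,r/3)$, $j=1,\dots,m$. For each $j$, the closed ball $k_j+r\mathbb{B}$ lies in $\into(S(\bar{x}))$, so I can choose a nondegenerate simplex $\Delta_j\subset S(\bar{x})$ with $p+1$ affinely independent vertices $y^j_1,\dots,y^j_{p+1}$ satisfying $k_j+\tfrac{r}{2}\mathbb{B}\subset\into(\Delta_j)$. Lower semicontinuity of $S$ at $\bar{x}$ then furnishes, for each $(j,i)$, selections $y^j_i(x)\in S(x)$ with $y^j_i(x)\to y^j_i$ as $x\to\bar{x}$. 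Since the map sending $p+1$ affinely independent points to their convex hull is continuous in the Hausdorff distance, the perturbed simplex $\co(y^j_1(x),\dots,y^j_{p+1}(x))$ still contains $k_j+\tfrac{r}{3}\mathbb{B}$ for $x$ sufficiently close to $\bar{x}$; by convexity of $S(x)$ this yields $B(k_j,r/3)\subset S(x)$. Intersecting the finitely many neighborhoods so obtained gives a common $V$ on which $K\subset S(x)$. The delicate step, and the main technical work of the proof, is the uniform control over the $m$ perturbed simplices: it rests on their affine nondegeneracy, which prevents collapse under small displacement of the vertices, and on the finiteness of the cover, which allows intersecting neighborhoods without losing uniformity in $j$.
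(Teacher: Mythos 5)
Your Part (1) is correct and is essentially the paper's own argument: outer regularity of the finite Borel measure plays exactly the role that continuity from above along the enlargements $S(\bar{x})+\tfrac1n\Ball$ plays in the paper (one small caveat: to pass from an open $U\supset S(\bar{x})$ to $S(x)\subset U$ nearby you need either the topological notion of upper semicontinuity or compactness of $S(\bar{x})$; this is immaterial in the paper's compact setting). In Part (2) your reduction is also the paper's skeleton: dispose of the case $\into(S(\bar{x}))=\emptyset$, produce a compact $K\subset\into(S(\bar{x}))$ with $\mu(K)\geq\mu(S(\bar{x}))-\varepsilon$ (the paper takes $K=K_{n_1}=\{y: B(y,\tfrac1{n_1})\subset S(\bar{x})\}$), and prove that $K\subset S(x)$ for $x$ near $\bar{x}$. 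Where you genuinely diverge is in the proof of this containment: the paper uses the characterization of lower semicontinuity $S(\bar{x})\subset S(x)+\tfrac1{n_1}\Ball$ near $\bar{x}$, notes $K_{n_1}+\tfrac1{n_1}\Ball\subset S(\bar{x})$, and cancels the common summand $\tfrac1{n_1}\Ball$ by the cancellation (R\aa dstr\"om) law for convex sets, getting $K_{n_1}\subset S(x)$ in two lines; you instead perturb vertices of simplices.

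That simplex step, as written, fails in high dimension. From $k_j+r\Ball\subset\into(S(\bar{x}))$ you cannot in general choose a simplex $\Delta_j\subset S(\bar{x})$ with $k_j+\tfrac r2\Ball\subset\into(\Delta_j)$: for any $p$-simplex with $k+\rho\Ball\subset\Delta\subset k+\tau\Ball$ one has $\tau\geq p\rho$ (let $u_1,\dots,u_{p+1}$ be the unit outer facet normals and $\lambda_i>0$, $\sum_i\lambda_i=1$, with $\sum_i\lambda_i u_i=0$; since the vertex $v_i$ lies on every facet $j\neq i$, whose hyperplane is at distance at least $\rho$ from $k$, while $\langle u_i,v_i-k\rangle\geq-\tau$, the identity $\sum_j\lambda_j\langle u_j,v_i-k\rangle=0$ gives $\lambda_i\tau\geq(1-\lambda_i)\rho$, and summing over $i$ yields $\tau\geq p\rho$). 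So if, say, $S(\bar{x})=k+2r_0\Ball$ and the compact set furnished by inner regularity sits near its center, then $r=r_0$, every candidate $\Delta_j$ lies in $k_j+(2r+\tfrac r3)\Ball$, and containing $k_j+\tfrac r2\Ball$ would force $p\leq 4$; for $p\geq5$ no such simplex exists. The idea is repairable, but the constants must degrade with dimension: take $\Delta_j$ to be the regular simplex inscribed in $k_j+r\Ball$, whose inradius is $r/p$, and cover $K$ by balls of radius of order $r/p$; note also that your appeal to Hausdorff continuity of the convex hull needs one more word, since Hausdorff-closeness alone does not give ball containment — one uses that a convex body $\Delta'$ with $B(k,\rho)\subset\Delta'+\epsilon\Ball$ contains $B(k,\rho-\epsilon)$, which is again the cancellation law (or continuity of barycentric coordinates). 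With these repairs your route works, but as stated the key construction breaks for $p\geq5$, which is precisely the dimensional bookkeeping the paper's cancellation-law argument avoids.
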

	\begin{proof}
		1. Consider the sequence of sets $E_n:=S(\bar{x})+\tfrac{1}{n}\Ball$ decreases to $S(\bar{x})$, that is, $E_{n+1}\subset E_n$ for all $n\in \N$ and $\bigcap_n E_n=S(\bar{x})$. Then, by continuity of the measure, there exists $n_0\in\N$ such that $\mu(S(\bar{x}))+\varepsilon>\mu(E_{n_0})$. Now since $S$ is upper semicontinuous, there exists $\delta>0$ such that $ S(x)\subset S(\bar{x})+\tfrac{1}{n_0}\Ball= E_{n_0}$ for any $x\in B(\bar{x},\delta)\cap X$, and so $\mu(S(x))\leq\mu(E_{n_0})$. Putting both inequalities together, we have 
		\[ 
		\mu (S(x))<\mu(S(\bar{x}))+\varepsilon,\quad \forall x \in B(\bar{x},\delta)\cap X. 
		\]
		Since the above argument works for any $\varepsilon>0$, we have proved that $\mu(S(\cdot))$ is upper semicontinuous at $\bar{x}$.
		
		2. If $\into(S(\bar{x})) = \emptyset$, then $\mu(S(\bar{x})) = \mu(\partial S(\bar{x})) = 0$. Thus, the lower semicontinuity of $\mu(S(\cdot))$ at $\bar{x}$ follows trivially from the non-negativity of $\mu$. Assume then that $\into(S(\bar{x}))$ is nonempty and let us consider the sequence of sets $(K_n)_{n\in\N}$ defined by
		\[
		K_n:=\left \{x: B\left (x,\tfrac{1}{n}\right )\subset S(\bar{x}) \right \}.
		\]
		Clearly, $K_n$ increases to $ \into(S(\bar{x}))$, that is, $\bigcup_{n}K_n=\into(S(\bar{x}))$ and $K_n\subset K_{n+1}$ for all $n\in \N$. Thus, by the continuity of the measure, given $\varepsilon>0$ there exists $n_1$ such that 
		\begin{equation}
			\mu(S(\bar{x}))=\mu(\into( S(\bar{x})))<\mu (K_{n_1})+\varepsilon,
			\label{eq:aprox1}
		\end{equation}
		where the first equality is due to the assumption $\mu(\partial S(\bar{x}))=0$. Now, from the lower semicontinuity of $S$ at $\bar{x}$ (see \cite[Proposition 5.12]{rockafellar2009variational}) we know there exists $\delta>0$ such that 
		\[ 
		S(\bar{x})\subset S(x)+\tfrac{1}{n_1}\Ball,\quad \forall x\in B(\bar{x},\delta)\cap X.
		\]
		Since $K_{n_1}+\tfrac{1}{n_1}\Ball \subset S(\bar{x})$, we get that for any $x\in B(\bar{x},\delta)\cap X$ one has 
		\[ 
		K_{n_1}+\tfrac{1}{n_1}\Ball\subset S(x)+\tfrac{1}{n_1}\Ball.
		\]
		Using the cancellation law for convex sets (see \cite[Corollary 3.35]{rockafellar2009variational}), we obtain that for every $x\in B(\bar{x},\delta)\cap X$, $ K_{n_1}\subset S(x)$ and so $\mu(K_{n_1})\leq \mu(S(x))$. Combining this last inequality with \eqref{eq:aprox1} we get
		\[
		\mu(S(\bar{x}))\leq\mu(S(x))+\varepsilon,\quad \forall x\in B(\bar{x},\delta)\cap X, 
		\]
		which proves the lower semicontinuity of $\mu(S(\cdot))$ at $\bar{x}$. 
	\end{proof}
	
	For a solution map $S$ whose values are full-dimensional, we can state a first theorem that provides sufficient conditions to ensure weak continuity of beliefs admitting density functions, in the spirit of \cite[Theorem 3.1]{Mallozzi1996}. Here, we write $L^1:= L^1(Y,\mathcal{B}(Y),\lambda_k)$, as the usual space of Lebesgue-integrable functions over $Y$, with $k=\adim(Y)$.
	
	\begin{theorem}\label{thm:WeakContinuityFullDim}
		Let $Y$ be a nonempty convex compact subset of $\R^p$, $S:X\tto Y$ be a set-valued map with nonempty convex and compact values, and $\bar{x}\in X$. Suppose that $S$ is continuous at $\bar{x}$ and that the relative interior of $S(x)$ with respect to $Y$ is nonempty, for each $x$ near $\bar{x}$. Let $\beta:X\to \mathscr{P}(Y)$ be a belief over $S$ with a density function $\rho$ that satisfies that $x\mapsto \rho(x,\cdot)$ is $L^1$-continuous at $\bar{x}$, that is,
		\[
		\| \rho(x,\cdot) - \rho(\bar{x},\cdot) \|_{L^1} \xrightarrow{x\to \bar{x}} 0.
		\]
		Then, $\beta$ is weak continuous at $\bar{x}$.
	\end{theorem}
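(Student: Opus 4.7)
The plan is to verify weak continuity of $\beta$ at $\bar x$ by invoking the Portemanteau Theorem (Theorem~\ref{thm:Portemanteau}), namely by showing that $\int_Y f\,d\beta_x \to \int_Y f\,d\beta_{\bar x}$ as $x\to\bar x$ for every bounded continuous function $f:Y\to\R$. A crucial simplification is that, since each $S(x)$ near $\bar x$ has nonempty relative interior in $Y$, all affine dimensions $d_x$ coincide with $k:=\adim(Y)$, so a single reference measure $\lambda_k$ is used throughout and the varying-dimension pathologies do not arise. Writing $Z(x):=\int_{S(x)}\rho(x,y)\,d\lambda_k(y)>0$, the task reduces to proving that, for every bounded measurable $g:Y\to\R$,
\begin{equation*}
\Phi_g(x):=\int_{S(x)} g(y)\,\rho(x,y)\,d\lambda_k(y)
\end{equation*}
is continuous at $\bar x$; then $\int_Y f\,d\beta_x=\Phi_f(x)/\Phi_1(x)$ converges to $\Phi_f(\bar x)/\Phi_1(\bar x)$, since $\Phi_1(\bar x)=Z(\bar x)>0$ by the hypothesis on $\rho$.

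For the continuity of $\Phi_g$, I would use the standard add-and-subtract decomposition
\begin{equation*}
\Phi_g(x)-\Phi_g(\bar x)=\int_{S(x)} g(y)\bigl[\rho(x,y)-\rho(\bar x,y)\bigr]\,d\lambda_k(y)+\left(\int_{S(x)} g(y)\rho(\bar x,y)\,d\lambda_k - \int_{S(\bar x)} g(y)\rho(\bar x,y)\,d\lambda_k\right).
\end{equation*}
The first (density) term is bounded by $\|g\|_\infty\,\|\rho(x,\cdot)-\rho(\bar x,\cdot)\|_{L^1}$, which tends to $0$ as $x\to\bar x$ by the assumed $L^1$-continuity of the density.

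For the second (set) term, I would introduce the signed finite Borel measure $\nu_g(A):=\int_A g(y)\rho(\bar x,y)\,d\lambda_k(y)$ on $Y$ (finite because $\rho(\bar x,\cdot)\in L^1$ and $g$ is bounded), decompose it into its positive and negative parts $\nu_g=\nu_{g^+}-\nu_{g^-}$, and apply Lemma~\ref{lemma:continuityofmesure} to each piece. The upper semicontinuity assertion uses only the continuity and convex-compact-valuedness of $S$; the lower semicontinuity assertion requires $\nu_{g^\pm}(\partial S(\bar x))=0$, which in turn reduces to $\lambda_k(\partial S(\bar x))=0$, since $\nu_{g^\pm}\ll\lambda_k$. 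The latter is the classical fact that the relative boundary in $\aff(Y)$ of a convex set with nonempty relative interior is $\lambda_k$-negligible. Combining both directions gives continuity of $\nu_{g^\pm}(S(\cdot))$ at $\bar x$, hence of the bracketed set-term, completing the continuity of $\Phi_g$.

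The delicate point I anticipate is precisely this boundary-negligibility claim together with the correct reading of $\partial S(\bar x)$ in Lemma~\ref{lemma:continuityofmesure}: when $\adim(Y)<p$, the topological boundary in $\R^p$ of a convex subset of $Y$ coincides with its entire closure, so negligibility must be understood relative to $\aff(Y)$. Because $\lambda_k$ is supported on $\aff(Y)$, so are $\nu_{g^\pm}$, and only the relative boundary is effectively measured, which makes the standard convex-geometry argument apply cleanly. Once this point is settled, specializing $\Phi_g$ to $g\equiv 1$ and to $g=f$ and dividing gives the desired convergence $\int_Y f\,d\beta_x\to\int_Y f\,d\beta_{\bar x}$, and weak continuity of $\beta$ at $\bar x$ follows from Theorem~\ref{thm:Portemanteau}.
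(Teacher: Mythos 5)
Your proof is correct, and it rests on the same two pillars as the paper's: Lemma~\ref{lemma:continuityofmesure} applied to the frozen-density measure $A\mapsto\int_A\rho(\bar{x},y)\,d\lambda_k(y)$, and the bound $\|\rho(x,\cdot)-\rho(\bar{x},\cdot)\|_{L^1}$ to absorb the moving density; but you verify a different Portemanteau criterion. The paper fixes a closed set $C$ and proves only the one-sided inequality $\limsup_{x\to\bar{x}}\beta_x(C)\leq\beta_{\bar{x}}(C)$, which needs just upper semicontinuity of $x\mapsto\mu(S(x)\cap C)$ (no boundary condition) plus continuity of the denominator $x\mapsto\mu(S(x))$; you instead prove convergence of $\int_Y f\,d\beta_x$ for bounded continuous $f$, via the Jordan decomposition of the signed measure $g\,\rho(\bar{x},\cdot)\lambda_k$ and two-sided continuity of $\nu_{g^\pm}(S(\cdot))$. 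Both routes invoke $\lambda_k(\partial S(\bar{x}))=0$ exactly once (you for $\nu_{g^\pm}$, the paper for the total mass), and both require the same dimensional reduction: the paper's ``without loss of generality $Y$ has nonempty interior'' identifies $\aff(Y)$ with $\R^p$ before using Lemma~\ref{lemma:continuityofmesure}, whose interiors and boundaries are taken in the ambient space. Your closing remark that negligibility must be read relative to $\aff(Y)$ is the same point, but to apply the lemma literally you should make that identification explicit, since its second part is stated and proved with balls of $\R^p$ and, when $\adim(Y)<p$, the hypothesis $\mu(\partial S(\bar{x}))=0$ fails verbatim ($\partial S(\bar{x})=S(\bar{x})$ in $\R^p$). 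Your preliminary observation that the nonempty relative interior forces $d_x=\adim(Y)$ near $\bar{x}$ is correct and is what legitimizes the single reference measure $\lambda_k$. In sum, your test-function argument is slightly longer and needs both semicontinuities, while the paper's closed-set estimate is shorter because only upper semicontinuity is required for the numerator; both are sound.
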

	
	\begin{proof}
		Without losing any generality, we assume that $Y$ has nonempty interior. Define the measure $\mu$ given by
		\[
		\mu(A) = \int_A \rho(\bar{x},y)d\lambda(y).
		\]
		By Lemma \ref{lemma:continuityofmesure}, the mapping $x\mapsto \mu(S(x))$ is continuous at $\bar{x}$ and, for any closed subset $C\subset Y$ intersecting $S(\bar{x})$, the mapping $x\mapsto\mu(S(x)\cap C)$ is upper semicontinuous at $\bar{x}$.
		Let $C$ be a closed subset of $Y$. By the Portemanteau Theorem, it is enough to prove that
		\[
		\limsup_{x\to \bar{x}} \beta_x(C) \leq \beta_{\bar{x}}(C).
		\]
		Such inequality holds trivially if $C\cap S(\bar{x})=\emptyset$, due to the upper semicontinuity of $S$. Thus, we assume that $C\cap S(\bar{x})\neq\emptyset$. Now, for $x$ near enough to $\bar{x}$ we have that $\mu(S(x)) - \|\rho(x,\cdot) - \rho(\bar{x},\cdot)\|_{L^1}$ is strictly positive, and so we can write
		\begin{align*}
			\limsup_{x\to \bar{x}}\beta_x(C) &= \limsup_{x\to \bar{x}}\frac{\int_{S(x)\cap C}\rho(x,y)d\lambda(y)}{\int_{S(x)}\rho(x,y)d\lambda(y)}\\
			&\leq \limsup_{x\to \bar{x}}\frac{\int_{S(x)\cap C}\rho(x,y)d\lambda(y)}{\mu(S(x)) - \|\rho(x,\cdot) - \rho(\bar{x},\cdot)\|_{L^1}}\\
			&\leq \limsup_{x\to \bar{x}}\frac{\mu(S(x)\cap C) + \|\rho(x,\cdot) - \rho(\bar{x},\cdot)\|_{L^1}}{\mu(S(x)) - \|\rho(x,\cdot) - \rho(\bar{x},\cdot)\|_{L^1}}\\
			& \leq \frac{\mu(S(\bar{x})\cap C)}{\mu(S(\bar{x}))} = \beta_{\bar{x}}(C).
		\end{align*}
		The proof is then finished. 
	\end{proof}
	
	
	By invoking Theorem \ref{thm:ExistenceSingleLeader}, the above result yields the following direct corollary, that recovers the existence result of \cite[Theorem 3.2]{Mallozzi1996} for regularized bilevel programs with continuous data.
	
	\begin{corollary}\label{cor:ExistenceRegularized} Let $Y$ be a nonempty convex compact subset of $\R^p$. Let us consider a bilevel game given by \eqref{eq:LeaderProblem-Stochastic}  with only one leader (i.e., $|\mathfrak{I}|=1$), for which $S$ is given as the $\varepsilon$-argmin of a lower-level problem for some fixed $\varepsilon>0$, that is,
		\begin{equation}\label{eq:RegularizedFollowerProblem}
			S(x) = \left\{ y\in \R^p\, :\ f(x,y)\leq \min_{y\in Y} f(x,y) + \varepsilon,\, y\in Y \right\}.
		\end{equation}
		If $\theta$ and $f$ are continuous and $X$ is nonempty compact, then the Bayesian approach 
		admits a solution for any belief $\beta$ over $S$ with 
		a density function $\rho$ satisfying that $x\mapsto\rho(x,\cdot)$ is $L^1$-continuous.
	\end{corollary}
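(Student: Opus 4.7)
The plan is to verify the three hypotheses of Theorem \ref{thm:ExistenceSingleLeader}. Hypotheses (i) and (ii) --- lower semicontinuity of $\theta$ and nonempty compactness of $X$ --- follow at once from the data. The crux is (iii), the weak continuity of $\beta$, which I will derive from Theorem \ref{thm:WeakContinuityFullDim}.

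To invoke that theorem at a generic $\bar{x}\in X$, three ingredients are required: the $L^1$-continuity of $\rho(x,\cdot)$, which is part of the hypothesis; nonempty relative interior of $S(x)$ in $Y$ for $x$ near $\bar{x}$; and continuity of the reaction map $S$. For the relative interior, note that any minimizer $y^*\in\arg\min_{y\in Y} f(x,y)$ satisfies $f(x,y^*)=v(x)<v(x)+\varepsilon$, where $v(x):=\min_{y\in Y} f(x,y)$; continuity of $f(x,\cdot)$ then yields an open neighborhood $U$ of $y^*$ in $\R^p$ with $f(x,\cdot)<v(x)+\varepsilon$ on $U$, so $U\cap Y\subset S(x)$ and $y^*$ lies in the relative interior of $S(x)$ with respect to $Y$. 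Continuity of $S$ itself is a classical consequence of the strict positivity of $\varepsilon$ in the $\varepsilon$-argmin formulation (see, e.g., \cite{lignola1997stability}): Berge's maximum theorem combined with the compactness of $Y$ delivers continuity of $v$; upper semicontinuity of $S$ then follows from a standard closed-graph argument, and lower semicontinuity is obtained by sliding any $\bar{y}\in S(\bar{x})$ toward a minimizer $y^*$ along the segment in the convex set $Y$, exploiting the strict slack $f(\bar{x},y^*)<v(\bar{x})+\varepsilon$ to produce the approximating sequence.

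With these three ingredients in hand, Theorem \ref{thm:WeakContinuityFullDim} gives the weak continuity of $\beta$ and Theorem \ref{thm:ExistenceSingleLeader} concludes with the desired existence. The main technical obstacle I expect is the boundary case of the lower semicontinuity of $S$ --- namely when $f(\bar{x},\bar{y})=v(\bar{x})+\varepsilon$ --- since it is precisely there that the strict positivity of $\varepsilon$ is essential, and the analogous continuity statement for the unregularized argmin ($\varepsilon=0$) generally fails.
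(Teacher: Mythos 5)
Your route is the paper's own: the corollary is presented there as a direct consequence of Theorems \ref{thm:ExistenceSingleLeader} and \ref{thm:WeakContinuityFullDim}, and your verification of the missing hypotheses (nonempty relative interior of $S(x)$ via the strict slack at a minimizer, continuity of the $\varepsilon$-argmin map) is exactly the intended filling-in. One caveat: Theorem \ref{thm:WeakContinuityFullDim} requires $S$ to have convex values, and your segment-sliding argument for lower semicontinuity tacitly needs $f(\bar{x},\cdot)$ to be quasiconvex --- for merely continuous $f$ a point $\bar{y}$ with $f(\bar{x},\bar{y})=v(\bar{x})+\varepsilon$ can be isolated from the strict sublevel set $\{y\in Y : f(\bar{x},y)<v(\bar{x})+\varepsilon\}$, and lower semicontinuity of $S$ may then fail. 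This convexity of the lower level is implicit in the paper's ``regularized convex'' setting (it is also what makes $\partial S(\bar{x})$ Lebesgue-null, which the proof of Theorem \ref{thm:WeakContinuityFullDim} uses through Lemma \ref{lemma:continuityofmesure}), so the omission is shared with the corollary's statement rather than being an error of yours; with that assumption made explicit, your argument is complete.
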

	
	
	To continue, we study set-valued maps with constant affine dimension, but not necessarily full-dimensional. This condition is a particular case of rectangular continuity, as stated in Proposition \ref{prop:ContinuityWeakThanRectangular}.
	
	\begin{lemma}\label{lem:continuityofkvolume}
		Let $Y$ be a nonempty closed subset of $\R^p$ and let $\bar{x}\in X$. Let $T:X\tto Y$ be a  set-valued map with nonempty convex and compact values, and with constant affine dimension $k$, which is continuous at $\bar{x}$. For any continuous function $\rho: X\times Y\to [0,+\infty)$, the decision-dependent measure $x\mapsto \mu_x$ given by
		\[
		\forall A\in\mathcal{B}(Y),\quad \mu_x(A) = \int_{A\cap T(x)} \rho(x,y)d\lambda_k(y),
		\]
		satisfies that the mapping $x\mapsto \mu_x(T(x))$ is continuous at $\bar{x}$.
	\end{lemma}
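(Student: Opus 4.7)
The plan is to reduce the problem to integration over a continuously varying family of full-dimensional convex compact subsets of $\R^k$, where Lemma \ref{lemma:continuityofmesure} applies. Since $T$ is lower semicontinuous with convex values, Michael's selection theorem yields a continuous selection $t:X\to Y$ with $t(x)\in T(x)$ for $x$ near $\bar{x}$. The assumption that $T$ has constant affine dimension $k$ combined with its Hausdorff continuity implies that the direction subspace $V(x):=\spn(T(x)-t(x))$ varies continuously in the Grassmannian $\mathrm{Gr}(k,\R^p)$: any basis of $V(\bar{x})$ built from $k+1$ affinely independent points of $T(\bar{x})$ can be perturbed to a basis of $V(x)$ using nearby points of $T(x)$. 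Gram--Schmidt applied to such a locally continuous frame then produces continuous maps $e_1(\cdot),\dots,e_k(\cdot)$ forming an orthonormal basis of $V(x)$ in a neighborhood of $\bar{x}$.

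Define the isometry $\psi_x:\R^k\to\aff T(x)$ by $\psi_x(s)=t(x)+\sum_{j=1}^k s_je_j(x)$ and let $K(x):=\psi_x^{-1}(T(x))\subset\R^k$. Each $K(x)$ is a nonempty convex compact subset of $\R^k$ of full affine dimension (hence with nonempty interior in $\R^k$), and $x\mapsto K(x)$ is continuous in a neighborhood of $\bar{x}$. The change of variables for the $k$-dimensional Hausdorff measure on $\aff T(x)$ gives
\begin{equation*}
\mu_x(T(x))=\int_{T(x)}\rho(x,y)\,d\lambda_k(y)=\int_{K(x)}\tilde\rho(x,s)\,d\lambda(s),\qquad \tilde\rho(x,s):=\rho(x,\psi_x(s)),
\end{equation*}
where $\tilde\rho$ is jointly continuous on a neighborhood of $\{\bar{x}\}\times K(\bar{x})$ in $X\times\R^k$.

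To conclude, I would decompose
\begin{equation*}
\int_{K(x)}\tilde\rho(x,s)\,d\lambda-\int_{K(\bar{x})}\tilde\rho(\bar{x},s)\,d\lambda
=\int_{K(x)}\bigl[\tilde\rho(x,s)-\tilde\rho(\bar{x},s)\bigr]d\lambda \;+\; \Bigl(\nu(K(x))-\nu(K(\bar{x}))\Bigr),
\end{equation*}
where $\nu$ is the finite Borel measure $\nu(A):=\int_A\tilde\rho(\bar{x},s)\,d\lambda(s)$. The first term tends to zero by uniform continuity of $\tilde\rho$ on a compact neighborhood, combined with the uniform boundedness of $\lambda(K(x))$ for $x$ near $\bar{x}$. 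The second term vanishes thanks to Lemma \ref{lemma:continuityofmesure}: its upper semicontinuity part applies to $K$ directly, and its lower semicontinuity part applies because $K(\bar{x})$ is a convex body in $\R^k$, so $\lambda(\partial K(\bar{x}))=0$ and hence $\nu(\partial K(\bar{x}))=0$.

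The main technical hurdle is the construction of the continuous orthonormal frame $(e_j(\cdot))$, which rests on Grassmannian continuity of the direction map $V(\cdot)$. This continuity is a consequence of the constant-dimension assumption and the Hausdorff continuity of $T$, but the justification must be written out carefully (via continuity of affinely independent selections and the implicit function theorem, or via orthogonal projections onto $V(\bar{x})$). Once the isometric parametrization is in place, the rest of the argument reduces to a routine combination of uniform continuity of $\tilde\rho$ and the previously established Lemma \ref{lemma:continuityofmesure}.
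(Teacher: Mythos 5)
Your argument is correct, and it reaches the conclusion by a genuinely different route than the paper, although both proofs hinge on the same key construction: an orthonormal frame of the direction space $\spn(T(x)-T(x))$, continuous at $\bar{x}$, obtained from selections of $T$ plus Gram--Schmidt. The paper \emph{fattens} $T(x)$: it adds the orthogonal unit ball $R(x)=\Ball\cap\spn(T(x)-T(x))^{\perp}$, applies Lemma \ref{lemma:continuityofmesure} to the full-dimensional convex set $D(x)=T(x)+R(x)$ in $\R^p$ with the plain Lebesgue measure, uses the factorization $\lambda(D(x))=\lambda_k(T(x))\,\lambda_{p-k}(R(x))$ to get continuity of $x\mapsto\lambda_k(T(x))$, and then treats a general density $\rho$ in a separate second step via a finite partition on which $\rho$ has small oscillation. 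You instead \emph{flatten}: the isometric parametrization $\psi_x$ transports everything to full-dimensional convex bodies $K(x)\subset\R^k$, so Lemma \ref{lemma:continuityofmesure} is applied once, directly to the weighted measure $\nu$ with density $\tilde\rho(\bar{x},\cdot)$ (its boundary-null hypothesis holds since $\partial K(\bar{x})$ is the boundary of a convex body, hence Lebesgue-null), and the variation of the density is absorbed by a single uniform-continuity estimate, using that $\lambda(K(x))$ stays bounded near $\bar{x}$. Your version avoids both the Fubini-type volume factorization and the oscillation/partition step, at the price of writing out the change of variables and the continuity of the frame, which is essentially the work the paper does in its Step 1 anyway. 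Two small repairs when writing it up: Michael's theorem is not directly available, since $T$ is only assumed continuous at the single point $\bar{x}$; but you only need a selection continuous at $\bar{x}$, e.g. $t(x)=\proj(\bar{y};T(x))$ for a fixed $\bar{y}\in T(\bar{x})$, which works because the values are convex and compact. Also, $\tilde\rho(\bar{x},s)=\rho(\bar{x},\psi_{\bar{x}}(s))$ is evaluated at points that need not lie in $Y$ when $s\in K(x)\setminus K(\bar{x})$; extend $\rho$ continuously to $X\times\R^p$ (Tietze) before defining $\nu$, which changes none of the estimates.
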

	
	\begin{proof}
		We divide the proof in two steps: First we will show the result for $\rho(x,y)\equiv 1$, that is, for $\mu_x \equiv \lambda_k$; and secondly for the general case.
		
		\textbf{Step 1}: Define $D(x)=T(x)+R(x)$, where $R(x)=\Ball\cap L(x)^\perp$ and $L(x) = \spn(T(x)-T(x))$. The map $D$ is clearly  nonempty and convex valued and moreover, $D$ must be continuous at $\bar{x}$.
		
		To prove the continuity of $D$ at $\bar{x}$, we will first show that $R$ is continuous at $\bar{x}$. To do so, we will construct single-valued mappings  $b_i:X\to\Ball$ with $i\in \{1,\ldots,k\}$ such that all are continuous at $\bar{x}$ and that $\{b_i(x)\ :\ i\in\{1,\ldots,k\}\}$ is an orthogonal unit basis of $L(x)$ for each $x$ near enough to $\bar{x}$.
		
		Since $T$ is lower semicontinuous at $\bar{x}$, 
		it is easy to verify that there exists a selection $s:X\to \R^p$ of $T$ which is continuous at $\bar{x}$. Replacing $T(x)$ by $T(x)-s(x)$, we may and do assume that $0\in T(x)$ for each $x\in X$, and thus $L(x) = \spn(T(x))$. Clearly, $\dim(L(x)) = k$ for each $x\in X$.
		
		Now, let $\{\bar{y}_1,\ldots,\bar{y}_k\}\subset T(\bar{x})$ be a basis of $L(\bar{x})$. For each $i\in \{1,\ldots,k\}$, we can define the set-valued map $F_i:X\tto Y$ given by
		\[
		F_i(x) = \begin{cases}
			T(x)\qquad&\mbox{ if }x\neq\bar{x},\\
			\{\bar{y}_i\}&\mbox{ if }x = \bar{x}.
		\end{cases}
		\]
		It is not hard to verify that for each $i\in \{1,\ldots,k\}$, $F_i$ remains lower semicontinuous at $\bar{x}$, and so we know that there exists a selection $s_i:X\to \R^p$ of $F_i$ which is continuous at $\bar{x}$. Clearly, the mappings $s_i$ are also selections of $T$ and, by construction, $\{ s_i(\bar{x})\ :\ i\in \{1,\ldots,k\} \}$ is linearly independent. It is known that (see, e.g., \cite[Chapter 5, Exercise 43a]{pugh2002real}) the continuity of the selections at $\bar{x}$ ensures that there is a neighborhood $U$ of $\bar{x}$ such that
		\begin{equation}\label{eq:inProof_linearIndepNeighborhood}
			\{ s_i(x)\ :\ i\in \{1,\ldots,k\} \}\text{ is linearly independent, for each }x\in U\cap X.
		\end{equation}
		
			
		Without losing generality, let us assume that $U = \R^d$. Now, applying the classic Gram-Schmidt algorithm, we can define the mappings $b_i:X\to \R^p$ for each $i\in \{1,\ldots,k\}$ where $\{b_i(x)\ :\ i=1,\ldots k\}$ is an orthogonal unit basis of $L(x)$. 
		It is not hard to see that the mappings $b_i$ remain continuous at $\bar{x}$, and so the desired construction is then completed.
		
		Now, to prove that $R$ is continuous at $\bar{x}$, it is enough to show that
		\[
		\Limsup_{x\to \bar{x}} R(x) \subset R(\bar{x})\subset \Liminf_{x\to \bar{x}} R(x).
		\]
		
		For the first inclusion, choose a sequence $(x_n)\subset X$ converging to $\bar{x}$ and a sequence $(y_n)\subset \R^p$ converging to $y\in \R^p$, satisfying that $y_n\in R(x_n)$ for each $n\in \N$. We need to show that $y\in R(\bar{x})$.
		
		Firstly, since $y_n\in \Ball$ for each $n\in \N$, it is clear that $y\in \Ball$. Now, on the one hand, since $y_n\in L(x_n)^{\perp}$, we have that $\proj(y_n; L(x_n)) = 0$.
		On the other hand, for $n\in\N$, we can write
		\[
		\proj(y_n; L(x_n)) = \sum_{i=1}^k \langle y_n,b_i(x_n)\rangle b_i(x_n),
		\]
		which yields, by continuity  at $\bar{x}$ of the mappings $b_i$, that 
		\[
		\proj(y_n; L(x_n)) \xrightarrow{n\to\infty} \sum_{i=1}^k \langle y,b_i(\bar{x})\rangle b_i(\bar{x}) = \proj(y;L(\bar{x})).
		\]
		We deduce that $\proj(y;L(\bar{x})) = 0$, proving that $y\in L(\bar{x})^{\perp}$ and so $y\in R(\bar{x})$.
		
		Now, for the second inclusion, choose a a sequence $(x_n)\subset X$ converging to $\bar{x}$ and a point $y\in R(\bar{x})$. We need to show that there exists a sequence $(y_n)$ converging to $y$ and satisfying that $y_n\in R(x_n)$ for all $n\in\N$ large  enough.
		
		Consider then, the sequence $(y_n)$ defined by $y_n = y - \proj(y;L(x_n))$. Since $y\in \Ball$, we get that $y_n\in \Ball$ for each $n\in \N$. Furthermore, by construction $y_n\in L(x_n)^{\perp}$ and so $y_n\in R(x_n)$ for each $n\in \N$. As we did before,
		\[
		y_n = y - \sum_{i=1}^k \langle y,b_i(x_n)\rangle b_i(x_n)\xrightarrow{n\to \infty} y - \sum_{i=1}^k \langle y,b_i(\bar{x})\rangle b_i(\bar{x}) = y - \proj(y;L(\bar{x})).
		\]
		The proof is finished by noting that, since $y\in L(\bar{x})^{\perp}$, then $y - \proj(y;L(\bar{x})) = y$. We have shown then that the set-valued map $R:X\tto \R^p$ is continuous at $\bar{x}$.
		Now, $D$ is the sum  of two set-valued maps that are continuous at $\bar{x}$, one of them being locally bounded, then $D$ is also continuous at $\bar{x}$ (see e.g. Exercise 5.24 in \cite{rockafellar2009variational}). To finish Step 1, we observe that
		\[
		\lambda(D(x))=\lambda_k(T(x))\lambda_{p-k}(R(x)),
		\]
		where $\lim_{x\to\bar{x}}\lambda(D(x))=\lambda(D(\bar{x}))$ (Lemma \ref{lemma:continuityofmesure}) and $\lambda_{p-k}(R(x))$ is a positive constant.
		Form this it directly follows that $x\mapsto\lambda_k(T(x))$ is continuous at $\bar{x}$.
		
		\textbf{Step 2:} Let $(x_n)\subset X$ be any sequence converging to $\bar{x}$. We need to show that
		\[
		\int_{T(x_n)}\rho(x_n,y)d\lambda_k(y) \to \int_{T(\bar{x})}\rho(\bar{x},y)d\lambda_k(y). 
		\]
		By upper semicontinuity of $T$, we can assume that $T(x_n)\subset T(\bar{x})+\Ball$. Fix $\varepsilon>0$. Let $U$ be a open neighborhood of $\bar{x}$ in $X$ such that $K=\sup_{U\times Y}\rho < +\infty$. Such a neighborhood exists by continuity of $x\mapsto \sup_{Y}\rho(x,\cdot)$ (see, e.g. \cite[Theorem 2.3.1]{Ichiishi1983}). By shrinking $U$ if necessary, there exists $\{ V_i\ :i\in I \}$ a finite measurable partition of $T(\bar{x})+\Ball$ such that $\diam(\rho(U\times V_i)) <\varepsilon$ for all $i\in I$. Then, for $n\in\N$ large enough (such that $x_n\in U$) and for every $i\in I$, we can write
		\begin{align*}
			&\int_{T(x_n)\cap V_i}\rho(x_n,y)d\lambda_k(y) -\int_{T(\bar{x})\cap V_i}\rho(\bar{x},y)d\lambda_k(y)\\
			\leq& (\sup_{U\times V_i}\rho) (\lambda_k(T(x_n)\cap V_i) - \lambda_k(T(\bar{x})\cap V_i)) + \diam(\rho(U\times V_i))\lambda_k(T(\bar{x})\cap V_i)\\
			&\leq K(\lambda_k(T(x_n)\cap V_i) - \lambda_k(T(\bar{x})\cap V_i)) + \varepsilon\lambda_k(T(\bar{x})\cap V_i).
		\end{align*}
		By summing the above inequalities through $i\in I$, we get that
		\[
		\int_{T(x_n)}\rho(x_n,y)d\lambda_k(y) -\int_{T(\bar{x})}\rho(\bar{x},y)d\lambda_k(y) \leq K(\lambda_k(T(x_n)) - \lambda_k(T(\bar{x}))) + \varepsilon\lambda_k(T(\bar{x})).
		\]
		Using a similar argument, we can show that
		\begin{align*}
			\left|\int_{T(x_n)}\rho(x_n,y)d\lambda_k(y) -\int_{T(\bar{x})}\rho(\bar{x},y)d\lambda_k(y)\right| &\leq K|\lambda_k(T(x_n)) - \lambda_k(T(\bar{x}))| + \varepsilon\lambda_k(T(\bar{x}))\\
			&\xrightarrow{n\to\infty} \varepsilon\lambda_k(T(\bar{x})),
		\end{align*}
		where the convergence follows from Step 1 of this proof. Since $\varepsilon$ is arbitrary, the proof is finished. 
	\end{proof}

	Now, we are ready to state and prove the main result of this section.
	
	\begin{theorem}\label{thm:WeakContinuityUnderRecContinuity}
		Let $Y$ be a nonempty compact subset of $\R^p$, $S:X\tto Y$ be a set-valued map with nonempty convex values, and let $\bar{x}\in X$. Let $\beta:X\to \mathscr{P}(Y)$ be a belief over $S$ with continuous density function $\rho$. If $S$ is rectangularly continuous at $\bar{x}$, then $\beta$ is weak continuous at $\bar{x}$. In particular, the Neutral belief $\iota:X\to\mathscr{P}(Y)$ over $S$ is weak continuous under rectangular continuity of $S$.
	\end{theorem}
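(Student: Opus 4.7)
The plan is to mimic the argument of Theorem \ref{thm:WeakContinuityFullDim}, but to replace the use of a single ambient Lebesgue measure by a Fubini-style decomposition that exploits precisely the ``rectangular'' structure encoded in Definition \ref{def:rectangularcontinuity}. By the Portemanteau theorem (Theorem \ref{thm:Portemanteau}), it suffices to establish that
\[
\limsup_{x\to\bar{x}} \beta_x(C) \leq \beta_{\bar{x}}(C)
\]
for every closed set $C\subset Y$; the case $C\cap S(\bar{x})=\emptyset$ being immediate from upper semicontinuity of $S$. Let $T_0,T_1,R_0,R_1$ be the set-valued maps provided by Rectangular Continuity at $\bar{x}$, and write $k=d_{\bar{x}}$ and $m(x)=d_x-k\geq 0$.

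The core observation is that, because $R_j(x)\subset\spn(T_j(x)-T_j(x))^{\perp}\cap\spn(S(x)-S(x))$, the set $T_j(x)+R_j(x)$ lies inside an affine space of dimension $d_x$, and the $d_x$-dimensional Lebesgue measure on it splits as the product of $\lambda_k$ on the $T_j$ direction and $\lambda_{m(x)}$ on the $R_j$ direction. Hence, for any Borel integrand $g$,
\[
\int_{T_j(x)+R_j(x)} g(y)\,d\lambda_{d_x}(y)
= \int_{T_j(x)}\!\int_{R_j(x)} g(t+r)\, d\lambda_{m(x)}(r)\, d\lambda_k(t).
\]
I will use the upper inclusion $S(x)\subset T_1(x)+R_1(x)$ to bound the numerator of $\beta_x(C)$ from above, and the lower inclusion $T_0(x)+R_0(x)\subset S(x)$ to bound the denominator from below, both after applying this Fubini identity. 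Then continuity of $\rho$ together with $\Lim_{x\to\bar{x}}R_j(x)=\{0\}$ allows the replacement $\rho(x,t+r)\approx\rho(\bar{x},t)$, uniformly on the compact set where we integrate, so that up to controllable errors
\[
\int_{T_j(x)+R_j(x)} \rho(x,y)\,d\lambda_{d_x}(y)
\;\sim\; \lambda_{m(x)}(R_j(x))\,\int_{T_j(x)}\rho(\bar{x},t)\,d\lambda_k(t).
\]

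For the numerator I will also need that $(T_1(x)+R_1(x))\cap C$ eventually stays close to $S(\bar{x})\cap C$; this follows by the same argument as in Lemma \ref{lemma:continuityofmesure}(1), using that $T_1(x)$ is upper semicontinuous with limit $S(\bar{x})$, that $R_1(x)\to\{0\}$, and approximating $S(\bar{x})\cap C$ from outside by a neighborhood where $\int \rho(\bar{x},\cdot)\,d\lambda_k$ is close to $\int_{S(\bar{x})\cap C}\rho(\bar{x},\cdot)\,d\lambda_k$. Applying Lemma \ref{lem:continuityofkvolume} to the constant-affine-dimension map $T_j$ then yields, as $x\to\bar{x}$,
\[
\int_{T_0(x)}\rho(\bar{x},t)\,d\lambda_k(t)\;\longrightarrow\;\int_{S(\bar{x})}\rho(\bar{x},t)\,d\lambda_k(t),
\]
and an analogous upper semicontinuity bound for the $T_1$-integral restricted to a closed neighborhood of $C$.

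Putting the two sides together, the $\lambda_{m(x)}(R_j(x))$ factors in numerator and denominator cancel up to the ratio $\lambda_{m(x)}(R_0(x))/\lambda_{m(x)}(R_1(x))$, which by condition (4) of Definition \ref{def:rectangularcontinuity} tends to $1$. Therefore
\[
\limsup_{x\to\bar{x}}\beta_x(C)\;\leq\;\frac{\int_{S(\bar{x})\cap C}\rho(\bar{x},y)\,d\lambda_k(y)}{\int_{S(\bar{x})}\rho(\bar{x},y)\,d\lambda_k(y)}\;=\;\beta_{\bar{x}}(C),
\]
and the Portemanteau criterion yields weak continuity of $\beta$ at $\bar{x}$. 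The specialization to the Neutral belief is immediate by taking $\rho\equiv 1$. The main technical obstacle is bookkeeping the Fubini decomposition rigorously when $m(x)$ jumps (so $d_x>d_{\bar{x}}$ for $x$ near but not equal to $\bar{x}$), and making sure the error terms from approximating $\rho(x,t+r)$ by $\rho(\bar{x},t)$ are uniform in $r\in R_j(x)$; both are controlled by continuity of $\rho$ on the compact set $\cl(\bigcup_{x\text{ near }\bar{x}} T_1(x)+R_1(x))$ and by the vanishing of $\diam(R_j(x))$.
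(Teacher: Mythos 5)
Your plan follows essentially the same route as the paper's proof: sandwich $S(x)$ between $T_0(x)+R_0(x)$ and $T_1(x)+R_1(x)$, Fubini-split the $\lambda_{d_x}$-integrals into a $T_j$-factor (handled via the constant-affine-dimension case and Lemma \ref{lem:continuityofkvolume}) times $\lambda_{d_x-d_{\bar{x}}}(R_j(x))$, absorb the density error by uniform continuity of $\rho$, cancel the $R$-volumes through condition \eqref{eq:ProportionOrthogonalPart}, and conclude by Portemanteau. The only difference is cosmetic: you check the $\limsup$ criterion on closed sets with an outer enlargement $C_\varepsilon$, whereas the paper checks the $\liminf$ criterion on open sets with the inner core $U_\varepsilon$ (after first treating the constant-dimension case) --- a mirror image of the same argument.
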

	
	\begin{proof}
		Let us consider an arbitrary sequence $x_n\to\bar{x}$.
		The theorem is reduced to prove that 
		$$\beta_{x_n}\xrightarrow{w}\beta_{\bar{x}}.$$ We divide the proof of this into two steps: In Step 1 we consider the case when $S$ has constant affine dimension, while in Step 2 we consider the general case of a rectangularly continuous $S$. In the latter case, from the definition of rectangular continuity, $S$ is enclosed between $T_0+R_0$ and $T_1+R_1$, where $T_0$ and $T_1$ have both constant affine dimension. Thus, we exploit what was proved in Step 1.
		
		\textbf{Step 1:} Let us assume here that $S$ has constant affine dimension $\adim(S(x)) = k$ for all $x\in X$. From the Portemanteau Theorem (see Theorem \ref{thm:Portemanteau}) it is enough to prove that for any closed set $C\subset Y$ we have
		\begin{equation}
			\limsup_n \beta_{x_n}(C)\leq \beta_{\bar{x}}(C).
			\label{ineq:portemanteauCerrado}
		\end{equation}
		Define the set-valued map $D:X\tto \R^p$ given by $D(x) = (S(x)\cap C)\times R(x)$, 
		where $R(x) = \Ball\cap \spn(S(x) - S(x))^{\perp}$. If $D(\bar{x}) = \emptyset$, then the upper semicontinuity of $S$ at $\bar{x}$ will entail that $D(x) = \emptyset$ near $\bar{x}$, and inequality \eqref{ineq:portemanteauCerrado} would follow trivially. Thus, let us assume that $D(\bar{x})\neq\emptyset$. Using the same arguments as in the first step of the proof of Lemma \ref{lem:continuityofkvolume}, we know that $D$ is upper semicontinuous. Thus, by Lemma \ref{lemma:continuityofmesure},
		\[
		\limsup_{n} \lambda(D(x_n)) \leq\lambda (D(\bar{x})), 
		\]
		which entails that $x\mapsto \lambda_k(S(x)\cap C)$ is upper semicontinuous at $\bar{x}$. Then, we can easily adapt the same development of the second step of the proof of Lemma \ref{lem:continuityofkvolume}, to conclude that
		\[
		\limsup_n \int_{S(x_n)\cap C} \rho(x_n,y)d_{\lambda_k}(y) \leq \int_{S(\bar{x})\cap C} \rho(\bar{x},y)d_{\lambda_k}(y).
		\]
		Finally, using Lemma \ref{lem:continuityofkvolume}, we can write
		\begin{align*}
			\limsup_n\beta_{x_n}(C) &= \limsup_n\frac{  \int_{S(x_n)\cap C} \rho(x_n,y)d\lambda_{k}(y)}{\int_{S(x_n)} \rho(x_n,y)d\lambda_{k}(y)}\\
			&\leq \frac{\int_{S(\bar{x})\cap C} \rho(\bar{x},y)d\lambda_{k}(y)}{\int_{S(\bar{x})} \rho(\bar{x},y)d\lambda_{k}(y)} = \beta_{\bar{x}}(C).
		\end{align*}
		The proof of the first part is finished.
		
		\textbf{Step 2: Now, we consider the general case.} From the Portemanteau Theorem (see Theorem  \ref{thm:Portemanteau}) it is enough to prove that for any open set $U$ in $Y$ we have
		\begin{equation}
			\liminf_n \beta_{x_n}(U)\geq \beta_{\bar{x}}(U).
			\label{ineq:portemanteauA}
		\end{equation}
		Note that $x\mapsto \adim(S(x))$ is lower semicontinuous, because $S$ is continuous with convex values. Furthermore, since it ranges over finitely many values, we can assume without loss of generality that $\adim (S(x_n))=l$ for all $n\in\N$, and $\adim(S(\bar{x}))=k\leq l$. To simplify notation, for $r \in \{0,1,\ldots,l\}$, we define the Borel measures $\mu_n^r$ and $\mu^r$ as
		\[
		\mu_n^r(A) = \int_{A\cap S(x_n)} \rho(x_n,y)d\lambda_r(y)\quad\text{ and }\quad\mu^r(A) = \int_{A\cap S(\bar{x})} \rho(\bar{x},y)d\lambda_r(y).
		\]
		
		Let $T_0,T_1,R_0,R_1:X\tto Y$ be the set-valued maps given by the definition of rectangular continuity (Definition \ref{def:rectangularcontinuity}), so that in particular
		\begin{equation*}
			T_0(x_n)+R_0(x_n)\subset S(x_n)\subset T_1(x_n)+R_1(x_n).
		\end{equation*}
		
		We claim that, for any given $\varepsilon>0$  there exists $n_0$ large enough such that for every $n\geq n_0$ we can approximate $\mu_n^l(S(x_n))$ from below as
		\begin{equation}
			(\mu_{n}^k(T_0(x_n))-\varepsilon)\lambda_{l-k}(R_0(x_n))\leq \mu_{n}^l(S(x_n)).
			\label{eq:firstineq}
		\end{equation}
		
		Set $\varepsilon'=\frac{\varepsilon}{2\lambda_k(S(\bar{x}))}>0$. Replacing $X$ by $\{x_n\ :\ n\in\N\}\cup\{\bar{x}\}$, we can assume without losing generality that $X$ is compact. 
		Thus, since $X$ and $Y$ are compact, $\rho$ is uniformly continuous and so there exists $\delta>0$ such that 
		\[
		\rho(x_n,u+v)>\rho(x_n,u)-\varepsilon',\quad \forall v\in \delta\Ball, \:\forall u\in Y, \forall n\geq n_0,
		\]
		for some $n_0\in\N$. Since $\lambda_k(T_0(x_n))\to\lambda_k(T_0(\bar{x}))$ (see Lemma \ref{lem:continuityofkvolume}) and $\lim R_0(x_n)=\{0\}$, 
		then for $n$ large enough we have that $\varepsilon>\varepsilon'\lambda_k(T_0(x_n))$ and $R_0(x_n)\subset \delta\Ball$. So, by $\lambda_k$-integrating over $T_0(x_n)$ we obtain 
		\begin{align*}
			\int_{T_0(x_n)}\rho(x_n,u+v)d\lambda_{k}(u)
			&\geq\int_{T_0(x_n)}(\rho(x_n,u)-\varepsilon' )d\lambda_k(u)\\
			&=\mu_{n}^k(T_0(x_n))-\varepsilon'\lambda_k(T_0(x_n))\\
			&\geq\mu_{n}^k(T_0(x_n))-\varepsilon.
		\end{align*}
		Therefore, by a mild application of the Fubini Theorem, if we now $\lambda_{l-k}$-integrate the previous inequality over $R_0(x_n)$ we obtain 
		\begin{align*}
			\mu_{n}^l(S(x_n))&\geq \mu_{n}^l(T_0(x_n)+R_0(x_n))\\
			&\geq(\mu_{n}^k(T_0(x_n))-\varepsilon)\lambda_{l-k}(R_0(x_n)).
		\end{align*}
		The claim stated in equation \eqref{eq:firstineq} is then proven. Following similar arguments as for \eqref{eq:firstineq}, for every $n$ large enough we can also approximate $\mu_n^l(S(x_n))$ from above as
		\begin{equation}
			\mu_{n}^l(S(x_n))\leq(\mu_{n}^k(T_1(x_n))+\varepsilon)\lambda_{l-k}(R_1(x_n))
			\label{eq:secondineq}
		\end{equation}
		
		Thus, by combining inequalities \eqref{eq:firstineq} and \eqref{eq:secondineq}, we can write for every $\varepsilon>0$ and $n\in\N$ large enough
		\[
		(\mu_{n}^k(T_0(x_n))-\varepsilon)\leq \frac{\mu_{n}^l(S(x_n))}{\lambda_{l-k}(R_0(x_n))}\leq(\mu_{n}^k(T_1(x_n))+\varepsilon)\frac{\lambda_{l-k}(R_1(x_n))}{\lambda_{l-k}(R_0(x_n))}.
		\]
		
		Recalling from \eqref{eq:ProportionOrthogonalPart} that $\lambda_{l-k}(R_1(x_n))/\lambda_{l-k}(R_0(x_n))\to 1$, noting that from Lemma \ref{lem:continuityofkvolume} we have
		$\lim\mu_{n}^k(T_0(x_n))=\lim \mu^k(T_1(\bar{x}))=\mu^k(S(\bar{x}))$, and taking $\varepsilon\to 0$, we deduce that 
		\begin{equation}
			\lim_n \frac{\mu_{n}^l(S(x_n))}{\lambda_{l-k}(R_0(x_n))}=\mu^k(S(\bar{x})).
			\label{eq:fracmeasures}
		\end{equation}
		Recall that we aim to prove \eqref{ineq:portemanteauA}, which multiplied by \eqref{eq:fracmeasures} simplifies to
		\begin{equation}
			\liminf_n \frac{\mu_{n}^l(S(x_n)\cap U)}{\lambda_{l-k}(R_0(x_n))}\geq \mu^k(S(\bar{x})\cap U).
			\label{eq:toprove}
		\end{equation}
		So, in order to prove \eqref{eq:toprove}, let us fix $\varepsilon>0$ and define $U_\varepsilon := \{y\in Y\ : B(y,\varepsilon)\cap Y \subset U\}$. Clearly, $U_{\varepsilon}\subset U$ and, if $n$ is large enough, we have $R_{0}(x_n)\subset \varepsilon \Ball$. Then $U_\varepsilon + R_0(x_n)\subset U$ and therefore
		\begin{align*}
			(T_0(x_n)\cap U_\varepsilon)+R_0(x_n)&\subset (T_0(x_n)+ R_0(x_n)) \cap (U_\varepsilon +R_0(x_n))\\
			&\subset S(x_n)\cap U.
		\end{align*}
		Again, following similar arguments as for the proof of \eqref{eq:firstineq},
		we obtain that 
		\begin{align*}
			(\mu_{n}^k(T_0(x_n)\cap U_\varepsilon)-\varepsilon)\lambda_{l-k}(R_0(x_n)) &\leq \mu_{n}^l(T_0(x_n)\cap U_\varepsilon + R_0(x_n))\\ &\leq \mu_{n}^l(S(x_n)\cap U).
		\end{align*}
		Therefore
		\[
		\liminf_n\frac{\mu_{n}^l(S(x_n)\cap U)}{\lambda_{l-k}(R_0(x_n))}\geq \liminf_n \mu_{n}^k(T_0(x_n)\cap U_\varepsilon)-\varepsilon\geq \mu^k(T_0(\bar{x})\cap U_\varepsilon)-\varepsilon,
		\]
		where the last inequality follows from Lemma \ref{lem:continuityofkvolume} and Step 1 of this proof, applied to $T_0$ instead of $S$. Indeed,
		\begin{align*}
			\liminf_n \mu_{n}^k(T_0(x_n)\cap U_\varepsilon) &= \liminf_n \frac{\mu_{n}^k(T_0(x_n)\cap U_\varepsilon)}{\mu_n^k(T_0(x_n))}\cdot\mu_n^k(T_0(x_n))\\
			&\geq \liminf_n \frac{\mu_{n}^k(T_0(x_n)\cap U_\varepsilon)}{\mu_n^k(T_0(x_n))}\cdot\liminf_n\mu_n^k(T_0(x_n))\\
			&= \frac{\mu^k(T_0(\bar{x})\cap U_\varepsilon)}{\mu^k(T_0(\bar{x}))}\cdot\mu^k(T_0(\bar{x})) = \mu^k(T_0(\bar{x})\cap U_\varepsilon).
		\end{align*} 	
		Finally, since $\mu^k(T_0(\bar{x})\cap U_\varepsilon)$  increases to $\mu^k(T_0(\bar{x})\cap U)=\mu^k(S(\bar{x})\cap U)$ as $\varepsilon$ goes to $0$, then we conclude that \eqref{eq:toprove} holds, finishing the proof. 
	\end{proof} 
	
	

	\subsection{Linear parametric problems verify rectangular continuity}\label{subsec:linearrectangular} 
	
	In this subsection we assume that the reaction map $S$ corresponds to the solution of a linear parametric optimization problem of the form
	
	\begin{equation}\label{eq:LinearFollowerProblem}
		P_F(x) = \begin{cases}
			\displaystyle\min_{y}\,\,& \langle c,y\rangle\\
			\mbox{s.t.}\quad & Ax + By \leq b,
		\end{cases}
	\end{equation}
	for matrices $A$ and $B$, and vectors $b$ and $c$ of appropriate dimensions. The main difficulty that arises from the reaction map in this setting is that the affine dimension of the solution of a parametric linear programming problem is not necessarily constant.

	Here, our aim is to show that the reaction map $S:X\tto Y$ of problem \eqref{eq:LinearFollowerProblem} verifies rectangular continuity, and therefore the Bayesian approach is well-posed for beliefs admitting a continuous density function, by invoking Theorems \ref{thm:ExistenceSingleLeader} and \ref{thm:WeakContinuityUnderRecContinuity}. In fact, we will derive our result for a more general class of problems, defined by convex weakly analytic functions (see, e.g., \cite{bank1982non}). Let us recall this definition.
	
	\begin{definition}[Weakly analytic function]\label{def:WeaklyAnalytic}
		A function $g:\R^p\to \R$ is said to be \emph{weakly analytic} if the following holds for any two vectors $y,u\in \R^p$: If the function $g_{y,u}(\alpha)=g(y+\alpha u)$ is constant on an open interval, then $g(y+\alpha u) =g(y)$ for all $\alpha\in\R$.
	\end{definition}
	
	Important examples of convex weakly analytic functions are linear functions and more generally all positive semi-definite  quadratic forms. A useful property of this class is given by the following lemma.
	
	\begin{lemma}\label{lemma:wAnalytic-ConstantConvex}
		Let $g:\R^p\tto\R$ be a convex and weakly analytic function. If $g$ is constant over a convex set $C$, then
		\begin{equation}\label{eq:wAnalytic-ConstantConvex}
			g(y+v)=g(y),\quad \mbox{ for all } y\in \R^p \text{ and } v\in L=\spn(C-C).
		\end{equation}
	\end{lemma}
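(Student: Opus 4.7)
The plan is to show that the set of invariance directions
\[
H := \{ u \in \R^p : g(\cdot + u) \equiv g(\cdot) \}
\]
is a linear subspace of $\R^p$ that contains $C - C$; since $L = \spn(C-C)$, this immediately yields \eqref{eq:wAnalytic-ConstantConvex}.

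That $H$ is closed under addition and scalar multiplication is routine from the definition, so the real content is proving that every $u = c_2 - c_1$ with $c_1, c_2 \in C$ belongs to $H$. I would fix such a $u$, set $y_0 := c_1$, and use that $g$ is constant on the segment $[c_1,c_2] \subset C$ to conclude that the one-variable function $\alpha \mapsto g(y_0 + \alpha u)$ is constant on the open interval $(0,1)$. Weak analyticity (Definition~\ref{def:WeaklyAnalytic}) then promotes this to constancy along the entire line $y_0 + \R u$.

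The main obstacle is upgrading constancy along this single line through $y_0$ to constancy along every parallel line through an arbitrary $y \in \R^p$. For this step I would use that, being real-valued and convex on all of $\R^p$, $g$ is continuous, so $\epi g$ is a closed convex set containing the full line $\{ (y_0 + \alpha u,\, g(y_0)) : \alpha \in \R \}$. Standard recession cone theory then gives that both $(u, 0)$ and $(-u, 0)$ are recession directions of $\epi g$, so
\[
g(y + \alpha u) \leq g(y) \quad\text{and}\quad g(y - \alpha u) \leq g(y), \qquad \forall\, y \in \R^p,\; \alpha \geq 0.
\]
A one-line midpoint convexity argument, $g(y) \leq \tfrac{1}{2}g(y + \alpha u) + \tfrac{1}{2}g(y - \alpha u) \leq g(y)$, forces equality throughout, so $g(y + \alpha u) = g(y)$ for every $y$ and every $\alpha \in \R$, placing $u$ in $H$ and finishing the argument. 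The only nonobvious point is the recession-cone step, but it is handled cleanly because $\epi g$ is closed (continuity of $g$) and because convexity makes the two one-sided inequalities collapse to equality.
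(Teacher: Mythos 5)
Your proof is correct, and it rests on the same two pillars as the paper's: weak analyticity upgrades constancy of $g$ on a segment inside $C$ to constancy along a whole line, and convexity then propagates this invariance to every parallel line. The packaging differs, though. The paper picks $\bar c\in\ri(C)$, so that \emph{every} direction $v\in L$ yields a small segment of $C$ through $\bar c$, and the whole subspace $L$ is treated at once; the propagation step is done by hand via the points $z_n=\tfrac1n(\bar c+nv)+\tfrac{n-1}{n}y\to y+v$ and the convexity inequality, which is precisely an elementary, self-contained version of your recession-cone step (a full line of constancy gives a recession direction of $\epi g$). You instead treat only directions $u\in C-C$, invoke closedness of $\epi g$ (continuity of a finite convex function) together with recession-cone theory to get $g(\cdot+\alpha u)\equiv g(\cdot)$ for all real $\alpha$, and then reach $L=\spn(C-C)$ through the set $H$ of invariance directions; this avoids the relative-interior argument at the price of citing recession-cone theory and the subspace structure of $H$. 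One small caveat: closure of $H$ under scalar multiplication is not purely ``routine from the definition'' — for a general function, invariance under translation by $u$ does not give invariance under $u/2$; it needs convexity (e.g., a convex function constant on the integer multiples of $u$ along each line is constant on the line). This gloss is harmless in your scheme, because your recession argument already delivers invariance under \emph{every} real multiple of each $u\in C-C$, so only closure of $H$ under addition, which is genuinely trivial, is needed to cover the span.
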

	\begin{proof}
		Fix $y\in \R^p$ and $v\in L$. We note that from the linear structure of the spaces it is enough to prove that $g(y+v)\leq g(y)$. Choose $\bar{c}\in \ri(C)$. Clearly, $L = \spn(C-\bar{c})$. Since $0\in \ri(C-\bar{c})$, there exists $\bar{\alpha}>0$ such that the open interval $(-\bar{\alpha} v,\bar{\alpha} v)$ is contained in $C-\bar{c}$. Then, the mapping
		$\alpha\mapsto g(\bar{c} + \alpha v)$
		is constant in the interval $(0,\bar{\alpha})$, which yields that
		\[
		g(\bar{c} + \alpha v) = g(\bar{c}),  \quad \forall \alpha\in\R.
		\]
		Now, consider the sequence of points $z_n=\frac{1}{n}(\bar{c}+nv)+ \frac{n-1}{n}y$ which clearly converges to $y+v$. From the convexity of $g$ we deduce that
		\[
		g(z_n)\leq \frac{1}{n}g(\bar{c}+nv)+ \frac{n-1}{n}g(y)= \frac{1}{n}g(\bar{c})+\frac{n-1}{n}g(y).
		\]
		So, passing to the limit, we obtain the desired inequality 
		$ g(y+v)\leq g(y)$. 
	\end{proof}
	
	
	Let $S:X\tto Y$ be the reaction map of the following convex parametric optimization problem
	\begin{equation}
		\min_{y} \{g_0(y)\, :\, g_i(y)\leq {\varphi_i(x)}, i=1,...,k\}
		\label{eqconvexproblem}
	\end{equation}
	where, for each $i=0,...,k$, $g_i:\R^p\to \R$ is convex and weakly analytic  and $\varphi_i$ is continuous, and the feasible set $K(x)=\{y\in \R^p: g_i(y)\leq \varphi_i(x),i=1,...,k\}$ is assumed nonempty for each $x\in X$. Here, the ambient space is $Y = \cco(K(X))$.
	
	Let $\varphi_0(x)=\inf\{g_0(y):y\in K(x)\}$. In what follows, it will be useful to describe $S$ as
	\[
	S(x)=\{y\in \R^p\, :\, g_i(y)\leq \varphi_i(x),\forall i=0,...,k\}.
	\]
	It is known that the weakly analytic property of $\{g_i\ :\ i=0,\ldots,k\}$ entails that the value function $\varphi_0$ and 
	the set-valued map $S$ are both continuous (see e.g. \cite[Theorem 4.3.5]{bank1982non}). The following theorem shows that, assuming that the ambient space $Y$ is compact,  $S$ is in fact rectangularly continuous.
	
	
	\begin{theorem}\label{thm:RecContinuityWeakAnalytic}
		Suppose that the ambient space $Y = \cco(K(X))$ is nonempty and compact. Then the set-valued map $S$ described as the solution of a the parametric problem \eqref{eqconvexproblem}, where for each $i=0,...,k$, $g_i:Y\to \R$ is convex and weakly analytic  and $\varphi_i:X\to\R$ is continuous, is rectangularly continuous.
	\end{theorem}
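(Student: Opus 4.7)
The strategy is to exploit Lemma \ref{lemma:wAnalytic-ConstantConvex}, which guarantees that a convex weakly analytic function constant on a convex set $C$ is automatically invariant along the full direction $\spn(C-C)$. Fix $\bar x\in X$ and set $L:=\spn(S(\bar x)-S(\bar x))$, $d_{\bar x}:=\dim L$. Applied to $g_0$ on $S(\bar x)$, where $g_0\equiv\varphi_0(\bar x)$, the lemma yields $g_0(y+v)=g_0(y)$ for every $y\in\R^p$ and $v\in L$; the analogous invariance holds for every constraint $g_i$ which is constantly equal to $\varphi_i(\bar x)$ on $S(\bar x)$. Applied instead to $g_0$ on $S(x)$, it gives invariance along $L_x:=\spn(S(x)-S(x))$ for every $x\in X$. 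The continuity of $S$ at $\bar x$ is known from \cite[Theorem 4.3.5]{bank1982non}, and the Painlev\'e--Kuratowski convergence $S(x)\to S(\bar x)$ forces $d_x:=\adim S(x)\geq d_{\bar x}$ for $x$ near $\bar x$; the subcase where $d_x=d_{\bar x}$ on a whole neighborhood is already handled by Proposition \ref{prop:ContinuityWeakThanRectangular}(2), so assume $d_x>d_{\bar x}$.

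To produce the four maps, I first construct continuously in $x$ a selection $y_0(x)\in\ri(S(x))$ with $y_0(\bar x)\in\ri(S(\bar x))$, together with continuous vectors $u_1(x),\ldots,u_{d_{\bar x}}(x)\in L_x$ whose values at $\bar x$ form a basis of $L$, by a Michael-type argument in the spirit of Step~1 of the proof of Lemma \ref{lem:continuityofkvolume}. Gram--Schmidt then yields a continuous $d_{\bar x}$-dimensional subspace $L_x^\parallel\subset L_x$ with $L_x^\parallel\to L$ as $x\to\bar x$; let $L_x^\perp$ be its orthogonal complement inside $L_x$, of dimension $d_x-d_{\bar x}$, and let $\pi^\parallel,\pi^\perp$ denote the associated projections within $L_x$. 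Define
\[
T_1(x):=y_0(x)+\pi^\parallel(S(x)-y_0(x)),\qquad R_1(x):=\pi^\perp(S(x)-y_0(x)),
\]
\[
T_0(x):=S(x)\cap\bigl(y_0(x)+L_x^\parallel\bigr),\qquad R_0(x):=\{v\in L_x^\perp : T_0(x)+v\subset S(x)\}.
\]
Conditions (1) and (2) of Definition \ref{def:rectangularcontinuity} are immediate (the upper sandwich is the identity decomposition; the lower one is by definition of $R_0$; the orthogonality of $R_j$ to $\spn(T_j-T_j)\subset L_x^\parallel$ inside $L_x$ is built in). The Painlev\'e--Kuratowski limits and the constancy $\adim T_j\equiv d_{\bar x}$ required by (3) follow from continuity of $S$, of $y_0$, and of $L_x^\parallel$, combined with the $L$-invariance of $g_0$ and of the constantly-active constraints: this invariance ensures that $L$-translates of $T_0(x)$ stay in $S(x)$ so long as the non-constantly-active constraints remain strict, a condition preserved near $\bar x$ by continuity of the $\varphi_i$'s.

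The main obstacle is condition~(4), i.e., $\lambda_{d_x-d_{\bar x}}(R_0(x))/\lambda_{d_x-d_{\bar x}}(R_1(x))\to 1$. Both sets sit in $L_x^\perp$ of dimension $d_x-d_{\bar x}$, and by construction $R_0(x)\subset R_1(x)$. The possible gap is controlled exclusively by those constraints $g_i$ that are active but not constantly active on $S(\bar x)$; for any such $i$, the set $\{y\in S(\bar x):g_i(y)=\varphi_i(\bar x)\}$ is a proper face of $S(\bar x)$, hence of zero $\lambda_{d_{\bar x}}$-measure inside $\aff(S(\bar x))$. Using continuity of the $\varphi_i$'s and the Hausdorff convergence $S(x)\to S(\bar x)$, one shows that the defect $R_1(x)\setminus R_0(x)$ is confined to an edge region whose $(d_x-d_{\bar x})$-Hausdorff measure becomes negligible compared to $\lambda_{d_x-d_{\bar x}}(R_1(x))$ as $x\to\bar x$. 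This quantitative comparison---precisely where the combination of convexity and weak analyticity (ruling out pathological flats along $L^\perp$) plays an essential role---is the delicate core of the proof; once it is in hand, all four conditions of Rectangular Continuity are verified at $\bar x$, and arbitrariness of $\bar x\in X$ concludes.
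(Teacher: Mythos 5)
Your construction has the right overall shape (a sandwich of $S(x)$ between a ``horizontal'' part converging to $S(\bar x)$ and a ``vertical'' part shrinking to $\{0\}$, with Lemma \ref{lemma:wAnalytic-ConstantConvex} and the continuity of $S$ from \cite[Theorem 4.3.5]{bank1982non} as inputs), but there is a genuine gap exactly where you acknowledge it: condition (4) of Definition \ref{def:rectangularcontinuity}. Because you chose two \emph{different} vertical maps, $R_1(x)=\pi^\perp(S(x)-y_0(x))$ and $R_0(x)=\{v\in L_x^\perp: T_0(x)+v\subset S(x)\}$, the ratio $\lambda_{d_x-d_{\bar x}}(R_0(x))/\lambda_{d_x-d_{\bar x}}(R_1(x))\to 1$ becomes a quantitative statement that your sketch does not prove: the size of $R_0(x)$ depends on where $y_0(x)$ sits inside $S(x)$ and on how the slice $T_0(x)$ meets the constraints, and reducing the defect $R_1(x)\setminus R_0(x)$ to ``faces of $S(\bar x)$ of zero $\lambda_{d_{\bar x}}$-measure'' does not control a measure ratio in the transverse $(d_x-d_{\bar x})$-dimensional directions. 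The paper sidesteps this difficulty entirely: after normalizing $0\in S(x)$ via a Michael selection and setting $L=\spn(S(\bar x))$, it takes $R_0=R_1=R$ with $R(x)=\{y_r\in L^{\perp}:(L+y_r)\cap S(x)\neq\emptyset\}$ (the shadow of $S(x)$ on $L^\perp$), so the ratio in (4) is identically $1$, and all the work is concentrated in proving $\Lim_{x\to\bar x}T_0(x)=S(\bar x)$ for $T_0(x)=\bigcap_{y_r\in R(x)}\bigl(S(x)-y_r\bigr)$ and $T_1(x)=\bigl(\bigcup_{y_r\in R(x)}S(x)-y_r\bigr)\cap L$; there Lemma \ref{lemma:wAnalytic-ConstantConvex} is applied by splitting the constraints into those constantly active on $S(\bar x)$ (invariant along $L$, hence harmless under translation by any $y_r\in R(x)$) and the rest (strict at any $\tilde y\in\ri(S(\bar x))$ and kept strict by continuity of $g_i$ and $\varphi_i$), after which one closes up using $S(\bar x)=\cl(\ri S(\bar x))$.

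A second, related problem: your $T_0(x)=S(x)\cap(y_0(x)+L_x^{\parallel})$ is a slice along the \emph{tilted} subspace $L_x^{\parallel}$, while Lemma \ref{lemma:wAnalytic-ConstantConvex} gives invariance of the constantly active $g_i$ only along $L$ itself. For a constraint constantly active on $S(\bar x)$ but not on $S(x)$, the slack at $y_0(x)$ may vanish as $x\to\bar x$, and the tilt of $L_x^{\parallel}$ introduces an error of its own; your argument does not show the former dominates the latter, so even the limit $\Lim_{x\to\bar x}T_0(x)=S(\bar x)$ required in condition (3) is not established for your slices (the paper's fiber-based $T_0$ avoids this because the translations are taken exactly in $L^\perp$ and the invariance along $L$ is exact). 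Finally, the continuous selection $y_0(x)\in\ri(S(x))$ with $y_0(\bar x)\in\ri(S(\bar x))$ is asserted rather than constructed: Michael's theorem provides a continuous selection of $S$, not of its relative interior, and Example \ref{ex:triangletosegment} shows that natural candidates such as the centroid can fail to be continuous. To repair the proposal you would either have to supply the missing measure estimate for (4) and the tilted-slice limit, or switch to the paper's choice of $T_0$, $T_1$ and a common $R$, for which (4) is trivial.
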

	\begin{proof}
		Since $S$ is lower semicontinuous with compact values, Michael Selection Theorem \cite{michael1956continuous} ensures that there exists a continuous selection $s:X\to Y$ of $S$. It is direct that rectangular continuity of $S$ is equivalent to rectangular continuity of $x\tto S(x) - s(x)$. Then, 
		by replacing $S(x)$ with $S(x) - s(x)$, we may and do assume that $0\in S(x)$ for all $x\in X$.
		
		Let $\bar{x}\in X$ and $L = \spn(S(\bar{x}))$. We define $R_0$ and $R_1$ as the same set-valued map $R$ given by 
		\[ 
		R(x)=\{y_r\in L^{\perp}: (L+y_r)\cap S(x)\neq\emptyset \}.
		\]
		Let us first verify that $\Lim_{x\to\bar{x}}R(x)=\{0\}$. Indeed, let $(x_n)\subset X$ be any sequence converging to $\bar{x}$, and let $y=\lim y_n$ with $y_n\in R(x_n)$. Since $y_n\in R(x_n)$, then there exists $z_n\in L$ such that $z_n+y_n\in S(x_n)$. Since $S$ is upper semicontinuous and the open set $U=L+B(\varepsilon)$ (for $\varepsilon>0$) contains $S(\bar{x})$, then there exists $n_0$ such that $n\geq n_0$ implies that $z_n+y_n\in L+B(\varepsilon)$.
		Projecting onto $L^{\perp}$, we deduce that $y_n\in B(\varepsilon)\cap L^{\perp}$. Now, by the continuity of the projection onto $L^{\perp}$ and the arbitrariness of $\varepsilon$, we conclude that $y_n\to 0$. This proves that $\Limsup_{x\to\bar{x}}R(x)=\{0\}$, which entails, by noting that $0\in R(x)$ for all $x\in X$, that $\Lim_{x\to\bar{x}}R(x)=\{0\}$.
		
		Let us define the maps $T_0$ and $T_1$ as
		\begin{equation*}
			T_0(x)=\bigcap_{y_r\in R(x)}S(x)-y_r,\qquad T_1(x)=\left(\bigcup_{y_r\in R(x)}S(x)-y_r\right)\cap L.
		\end{equation*}
		On the one hand, we always have that $T_0(x)\subset L$ and this set corresponds to the portion of $L$ that is contained in all the fibers $(L+y_r)\cap S(x)$, with $y_r\in R(x)$. On the other hand, $T_1(x)$ coincides with the projection of $S(x)$ onto $L$. Figure \ref{fig:T0_and_T1} illustrates this construction.
		
		\begin{figure}[h]
			\centering
			\scalebox{1.2}{
				\begin{tikzpicture}[scale=0.85]
					\draw[thick] (-2.5,0) -- (-0.5,2) -- (0.5,2) -- (2.5,0) -- cycle;
					\draw (-3.5,1)--(3.5,1);
					\node[above] at (0,2) {\small$S(x)$};
					\node[right] at (3.5,1) {\small $L$};
					\draw[dashed] (-2.5,0)--(-2.5,2)--(-0.5,2);  
					\draw[dashed] (2.5,0)--(2.5,2)--(0.5,2);
					\draw[very thick](-2.5,1)--(2.5,1);
					\draw[dashed] (-0.5,0)--(-0.5,2);
					\draw[dashed] (0.5,0)--(0.5,2);
					\draw [decorate, decoration={brace,amplitude=2pt,aspect=0.1}, xshift=0.4pt, yshift=0.4pt](-2.5,1.1)--(2.5,1.1) node[black,near start,xshift=-0.7cm,yshift=0.3cm] {\small $T_1(x)$};
					\draw [decorate, decoration={brace,amplitude=2pt, mirror}, xshift=0.4pt, yshift=-0.4pt](-0.5,0.9)--(0.5,0.9) node[black,midway,yshift=-0.3cm] {\small $T_0(x)$};
				\end{tikzpicture}
			}
			\caption{Illustration of $T_0(x)$ and $T_1(x)$}
			\label{fig:T0_and_T1}
		\end{figure}
		
		Since $R(\bar{x})= \{0\}$ and $S(\bar{x})\subset L$, it is not hard to realize that $T_0(\bar{x}) =S(\bar{x}) = T_1(\bar{x})$. Furthermore, by construction we have
		\begin{equation}
			T_0(x)+R(x) \subset S(x)\subset T_1(x)+R(x),\quad \forall x\in X.
			\label{eqinclusions}
		\end{equation}
		
		We will prove next that $\Lim_{x\to\bar{x}} T_j(x)=T_j(\bar{x})=S(\bar{x})$ for both $j=0$ and $j=1$.
		Since $R(x)$ obviously contains 0, then it is quite simple to deduce from \eqref{eqinclusions} that $\Liminf_{x\to\bar{x}} T_1(x)\supset S(\bar{x})$ and $\Limsup_{x\to\bar{x}}T_0(x)\subset S(\bar{x})$. So, it is enough to prove that
		\begin{equation}\label{eq:InclusionsToProve}
			\Limsup_{x\to\bar{x}}T_1(x)\subset	S(\bar{x})\subset\Liminf_{x\to\bar{x}} T_0(x). 
		\end{equation}
		
		The first inclusion is quite simple. Choose any sequence $(x_n)$ converging to $\bar{x}$ and any convergent sequence $(y_n)$ with $y_n\in T_1(x_n)$ for each $n\in \N$. Let $y = \lim y_n$. By construction, there exists a sequence $(z_n)$ with $z_n\in R(x_n)$ such that $y_n+z_n\in S(x_n)$ for each $n\in \N$.
		Since $\Lim R(x_n) = \{ 0\}$, then $z_n\to 0$ and so, $y_n+z_n\to y$. Moreover, since $S$ is upper semicontinuous, we conclude that $y\in S(\bar{x})$, which verifies the desired inclusion.
		
		For the second inclusion of \eqref{eq:InclusionsToProve}, we only need to show that $T_0$ is lower semicontinuous at $\bar{x}$, since $T_0(\bar{x}) = S(\bar{x})$. Take first $\tilde{y}\in \ri\ (S(\bar{x}))$. We will prove that $\tilde{y}\in T_0(x)$ for $x$ near enough to $\bar{x}$. 
		
		Let us define $I=\{i=0,...,k: g_i(y)={\varphi_i(\bar{x})},\ \forall y\in S(\bar{x})\}$ and $J = \{1,\ldots,k\}\setminus I$. 
		By \cite[Lemma 3.2.1]{bank1982non}, the inclusion $\tilde{y}\in \ri(S(\bar{x}))$ ensures that 
		\[
		g_i(\tilde{y})<{\varphi_i(\bar{x})},\;  i\in J.
		\]
		Fix $i\in J$. By continuity of $g_i$ and $\varphi_i$, there exist $\varepsilon_i,\delta_i>0$ such that for each $x\in B(\bar{x},\delta_i)\cap X$ and each $y\in B(\tilde{y},\epsilon_i)$, one has that $g_i(y)<\varphi_i(x)$.
		
		Take $\varepsilon = \min\{\varepsilon_i\ :\ i\in J \}$ and choose $\delta \leq \min\{\delta_i\ :\ i\in J \}$ small enough, such that the inclusion $R(x)\subset B(0,\varepsilon)$ is verified for each $x\in B(\bar{x},\delta)\cap X$. Such $\delta>0$ exists thanks to  the continuity of $R$ at $\bar{x}$.
		
		Now, let $x\in B(\bar{x},\delta)\cap X$ and let $y_r\in R(x)$. On the one hand, the above development ensures that
		\[
		g_i(\tilde{y} + y_r) < \varphi_i(x),\qquad \forall i\in J.
		\]
		
		On the other hand, since $y_r\in R(x)$, there exists $y_1\in L$ such that $y_1+y_r\in S(x)$. Noting that for any $i\in I$, $g_i$ is constant on $S(\bar{x})$ and recalling that $L = \spn(S(\bar{x}))$ and that $g_i$ is convex and weakly analytic, Lemma \ref{lemma:wAnalytic-ConstantConvex} yields that  
		\begin{align*}
			g_i(\tilde{y}+y_r)&=g_i(y_1+y_r+\overbrace{\tilde{y}-y_1}^{\in L})= g_i(y_1+y_r)\leq\varphi_i(x),\quad\forall i\in I.
		\end{align*}
		
		Mixing both inequalities, we deduce that  $\bar{y}+y_r\in S(x)$. Since $y_r\in R(x)$ is arbitrary, we deduce that
		\[ 
		\tilde{y}\in\bigcap_{y_r\in R(x)}S(x)-y_r=T_0(x), \quad \forall x\in B(\bar{x},\delta)\cap X.
		\]
		We conclude directly that $\tilde{y}\in \Liminf_{x\to\bar{x}}T_0(x)$, and so, $\ri (S(\bar{x}))\subset \Liminf_{x\to\bar{x}}T_0(x)$. Noting that $\Liminf_{x\to\bar{x}}T_0(x)$ is closed and recalling that $S(\bar{x})$ is convex, we can write
		\[ 
		S(\bar{x})=\cl(\ri\ (S(\bar{x}))) \subset \Liminf_{x\to\bar{x}}T_0(x),
		\]
		which finishes the proof.
	\end{proof}
	

	
	
	Finally, we can state a direct corollary, which provides a positive answer to the main motivation of this work. Its extension to the case of multiple leaders can be easily done to ensure existence of a mixed equilibrium, by invoking Theorem \ref{thm:ExistenceSingleLeader}.

	\begin{corollary}\label{cor:ExistenceLinear} Any bilevel programming problem of the form \eqref{eq:BilevelProgrammingProblem}, whose lower-level problem is of the form \eqref{eq:LinearFollowerProblem}, admits a solution under the Neutral approach (or under the Bayesian approach with any belief admitting a continuous strictly positive density function), provided that
		\begin{enumerate}
			\item[(i)] The decision set $X$ of the leader is compact and the cost function $\theta$ is lower semicontinuous.
			\item[(ii)] The constraint set $K(x):=\{y\in \R^p: Ax+By\leq b\}$ is bounded for each $x\in X$.
			\item[(iii)] $X\cap\dom K\neq\emptyset$.
		\end{enumerate}
	\end{corollary}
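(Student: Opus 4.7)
The plan is to obtain the corollary as a direct synthesis of the three main theorems of Section~\ref{sec:Existence}. Specifically, I would chain Theorem~\ref{thm:RecContinuityWeakAnalytic} (Rectangular Continuity of the reaction map of weakly analytic parametric problems), Theorem~\ref{thm:WeakContinuityUnderRecContinuity} (weak continuity of the Neutral belief when $S$ is rectangularly continuous), and Theorem~\ref{thm:ExistenceSingleLeader} (existence under weak continuity of the belief).

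First I would set up the ambient data so that Theorem~\ref{thm:RecContinuityWeakAnalytic} applies. By (iii) and the convention described in Section~\ref{sec:Pre}, I replace $X$ by $X\cap\dom K$, which is nonempty and remains compact since $\dom K$ is a polyhedron (it is the projection onto the $x$-coordinates of the closed polyhedron $\{(x,y):Ax+By\leq b\}$). I then take $Y=\cco(K(X))$, but for this to be compact one must argue that $K(X)$ is bounded. This is where I expect the main technical care is needed: pointwise boundedness of $K$ on $X$ together with compactness of $X$ does not in general imply uniform boundedness, and one must exploit the polyhedral structure. The cleanest route is to observe that $K$ has closed graph and nonempty compact values, so by standard set-valued analysis (or equivalently by Hoffman's bound applied to the parametric system $By\leq b-Ax$) the map $K$ is locally bounded on compact subsets of its domain, whence $K(X)$ is bounded and $Y$ is compact.

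Next, I would cast the lower-level problem \eqref{eq:LinearFollowerProblem} as an instance of \eqref{eqconvexproblem}. Writing the constraint $Ax+By\leq b$ componentwise as $(By)_i\leq b_i-(Ax)_i =: \varphi_i(x)$, and setting $g_0(y)=\langle c,y\rangle$ and $g_i(y)=(By)_i$ for $i=1,\dots,k$, all the $g_i$'s are linear, hence convex and weakly analytic (if $g_i(y+\alpha u)$ is constant on an open interval of $\alpha$, then by linearity $\langle B^\top e_i,u\rangle=0$, and so $g_i(y+\alpha u)=g_i(y)$ for every $\alpha\in\R$), while all the $\varphi_i$'s are affine hence continuous. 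Theorem~\ref{thm:RecContinuityWeakAnalytic} therefore yields that the reaction map $S$ is rectangularly continuous on $X$.

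Finally, since the Neutral belief $\iota$ is the conditional belief induced by the continuous density $\rho\equiv 1$, Theorem~\ref{thm:WeakContinuityUnderRecContinuity} applies and gives that $\iota$ is weak continuous at every point of $X$. Hypothesis~(i) then provides the lower semicontinuity of $\theta$ and the compactness of $X$ required by Theorem~\ref{thm:ExistenceSingleLeader}, which produces a solution of the bilevel programming problem under the Neutral approach and closes the argument.
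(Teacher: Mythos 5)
Your chain Theorem~\ref{thm:RecContinuityWeakAnalytic} $\to$ Theorem~\ref{thm:WeakContinuityUnderRecContinuity} $\to$ Theorem~\ref{thm:ExistenceSingleLeader}, with the Neutral belief viewed as the conditional belief of density $\rho\equiv 1$, is exactly how the paper derives this corollary, and identifying the compactness of $Y=\cco(K(X))$ as the point needing care is correct. One caveat: closed graph plus nonempty compact values does \emph{not} in general imply local boundedness (e.g.\ $K(x)=\{1/x\}$ for $x\in(0,1]$, $K(0)=\{0\}$), so you should rely on the polyhedral/Hoffman-bound justification you give in parentheses (equivalently, the recession cone $\{y:By\leq 0\}$ is independent of $x$, so boundedness of one nonempty value forces it to be $\{0\}$ and $K$ is Hausdorff–Lipschitz on its domain), which is correct and suffices.
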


	
	\section{Numerical experiments for the Neutral approach}\label{sec:Computability}
	
	This section is devoted to provide a numerical method to estimate solutions of bilevel programming problems under the Bayesian approach. Since this is the first work (to the best of our knowledge) dealing with numerical methods in a general form, we restrict ourselves to the study of bilevel programming with linear lower-level in the Neutral approach. So let us consider the reaction map associated to a linear parametric optimization problem of the form
	\begin{equation}
		S(x):=\argmin_y \{\langle c,y\rangle: Ax+By\leq b\},
		\label{eq:sofx}
	\end{equation}
	where $c\in \R^p$, $b\in\R^k$ and $A$, $B$ are matrices of appropriate dimensions. We assume that the feasible region $\mathscr{F}=\{(x,y)\in \R^d\times \R^p: Ax+By\leq b \}$ is nonempty and bounded, that is, a polytope. 
	We consider a leader whose problem is defined by $X=\dom(S) = \{ x\in\R^d\, :\ \exists y\in\R^p,\, (x,y)\in \mathscr{F}  \}$ and a continuous objective function $\theta:\R^d\times\R^p\to \R$. The ambient space is put as $Y = \{ y\in\R^p\, :\ \exists x\in\R^d,\, (x,y)\in \mathscr{F}  \}$. The linear bilevel programming problem in the Neutral approach is then 
	\begin{equation}\label{eq:ProblemLeader-Neutral}
		\min_{x\in X} \varphi
		(x)= \E_{\iota_x}(\theta(x,\cdot)),
	\end{equation}
	where $\iota:X\tto Y$ is the Neutral belief over $S$. 
	Note that if the leader has supplementary constraints of the form $Cx\leq e$, they can be incorporated into $S$ in the system $(A,b)$, and so there is no loss of generality in considering $X$ as the domain of $S$.
	
	One might guess that if $\theta(x,y)$ is also linear, then a solution of \eqref{eq:ProblemLeader-Neutral} can be found as the projection of a vertex of the feasible region $\mathscr{F}$, as it is the case for the Optimistic and the Pessimistic approaches (see, e.g., \cite{bialas1984two,zheng2016solution}). The following example shows, however, that the Neutral approach does not necessarily verify this property.
	
	\begin{example}
		\label{ex:neutralsolnotvertex}
		Consider the linear bilevel programming problem where the objective of the leader is defined as $\theta(x,y) = \langle d_1,x\rangle + \langle d_2,y\rangle$ with  $d_1=(0,0)$ and $d_2=(1,-7)$, and the reaction map $S$ is defined as the minimal convex-graph set-valued map such that 
		$S(v_1)=S(v_3)=\co\{(0,0),(1,0),(0,1)\}$ and $
		S(v_2)=S(v_4)=\co\{(1,0),(2,0),(2,1),(1, 1)\}$,
		where $v_1=(1,0)$, $v_2=(0,1)$, $v_3=(-1,0)$ and $v_4=(0,-1)$ (see Figure \ref{fig:examplehyperbanana}).  
		Equivalently, $S(x)$ can we written in the form \eqref{eq:sofx} by considering $c=(0,0)$ and
		$$
		\scriptsize
		A=
		\begin{pmatrix}
			-1 & -1 \\
			-1 &  1 \\
			-2 &  0 \\
			-1 &  0 \\
			0 & -1 \\
			0 &  0 \\
			0 &  0 \\
			0 &  1 \\
			1 &  0 \\
			2 &  0 \\
			1 & -1 \\
			1 &  1 \\
		\end{pmatrix},\, 
		B=\begin{pmatrix}
			0 &  0 \\
			0 &  0 \\
			1 &  1 \\
			1 &  0 \\
			-1 &  0 \\
			0 & -1 \\
			0 &  1 \\
			-1 &  0 \\
			1 &  0 \\
			1 &  1 \\
			0 &  0 \\
			0 &  0 \\
		\end{pmatrix},\, 
		b=\begin{pmatrix}
			1 \\
			1 \\
			3 \\
			2 \\
			0 \\
			0 \\
			1 \\
			0 \\
			2 \\
			3 \\
			1 \\
			1 \\
		\end{pmatrix}.
		$$
		It is easy to show that $\varphi(v_1) = \varphi(v_3) = -2$ and that $\varphi(v_2) = \varphi(v_4) = -2.25$. However, by taking $v_0 = (0,0)$, which is not the projection of any vertex of the feasible region, we get a better value, namely $\varphi(v_0) = -2.5$.
		
		\begin{figure}[h]
			\begin{center}
				\begin{tikzpicture}[scale=0.65]
					
					\draw[->] (-5,0) -- (5,0) node[anchor=south] {$x_1$};
					\draw[->] (-2,-2) -- (2,2) node[anchor=south] {$x_2$};
					
					\coordinate (v1) at (-3,0);
					\coordinate (v2) at (3,0);
					\coordinate (v3) at (-1.5,-1.5);
					\coordinate (v4) at (1.5,1.5);
					\foreach \x in {(v1), (v2), (v3), (v4)}{
						\fill \x circle[radius=2pt];
					}
					
					\node[below of = v1, node distance = .3cm] {$v_3$};
					\node[below of = v2, node distance = .3cm] {$v_1$};
					\node[below right of = v3, node distance = .3cm] {$v_4$};
					\node[below right of = v4, node distance = .3cm] {$v_2$};

					\coordinate (O1) at (-2,3);
					\draw[dashed] (v1) to[out=40,in=-70] (O1);
					\draw[dashed] (v2) to[out=155,in=-70] (O1);
					
					\coordinate (X1) at (.5,3);
					\coordinate (Y1) at (-2,4.5);
					
					\draw[->] (O1) -- (X1) node[anchor=south] {$y_1$};
					\draw[->] (O1) -- (Y1) node[anchor=south] {$y_2$};
					\filldraw[fill=gray!30!white] (O1) -- (-1,3) -- (-2,4) -- cycle;
					
					\coordinate (O2) at (3,3);
					\draw[dashed] (v3) to[out=30,in=-70] (O2);
					\draw[dashed] (v4) to[out=20,in=-70] (O2);
					
					\coordinate (X2) at (5.5,3);
					\coordinate (Y2) at (3,4.5);
					
					\draw[->] (O2) -- (X2) node[anchor=south] {$y_1$};
					\draw[->] (O2) -- (Y2) node[anchor=south] {$y_2$};
					\filldraw[fill=gray!30!white] (4,3) -- (5,3) -- (5,4) -- (4,4) -- cycle;
				\end{tikzpicture}
			\end{center}
			\caption{Values of $S$ of Example \ref{ex:neutralsolnotvertex} at the projections of the vertices of its graph}
			\label{fig:examplehyperbanana}
		\end{figure}
	\end{example}
	
	In order to solve \eqref{eq:ProblemLeader-Neutral}, it is necessary to compute the neutral value function $\varphi(x) = \E_{\iota_x}(\theta(x,y))$. This computation can be difficult since $\varphi$ is given as an integral over a parametric set. 
	However, assuming that $\theta$ is continuous, $\varphi(x)$ can be easily approximated through Monte-Carlo estimations (see, e.g., \cite{LeobacherPillichshammer2014}), by embedding $S(x)$ into a space of dimension equal to $\adim(S(x))$.

	Example \ref{ex:neutralsolnotvertex} is very illustrative in the above discussion, and thus, we conducted a detailed study of it.
	The first step is to observe that, since the leader's objective is of the form $\theta(x,y) = \langle d_1,x\rangle + \langle d_2,y\rangle$, then we have that
	$\varphi(x) = \left\langle d_1,x\right\rangle+ \left\langle d_2,\centroid(S(x))\right\rangle$.
	Next, since the reaction map $S$ satisfies that
	$S(x_1,x_2) = S(|x_1|,|x_2|)$,
	it is enough to compute $\varphi(x)$ in the positive quadrant part of its domain, which is given by the set $\Delta = \co\{ (0,0),(1,0),(0,1) \} = \co\{ v_0,v_1,v_2 \}$. In the case of Example \ref{ex:neutralsolnotvertex} we can give an explicit formula for $\varphi$ in $\Delta$. Indeed, for every $x=(x_1,x_2)\in \Delta$, the images of $S$ have the following form
	\begin{equation*}
		S(x)=\co\{(x_2,0),(2-x_1,0),(2-x_1,1-x_1),(2-2x_1,1),(x_2,1)\}.
	\end{equation*}
	So, by computing the centroid, we have the following Explicit formula in $\Delta$ for the neutral value function 
	\begin{equation}\label{eq:explicitformula}
		\varphi(x)=\frac{-30+9x_1+18x_1^2-3x_1^3+21x_2-3x_2^2}{12-6x_1-6x_2-3x_1^2}.
	\end{equation}
	
	In Figure \ref{fig:Explicit}, the graph of $\varphi$ of Example \ref{ex:neutralsolnotvertex} is computed using the above explicit formula. In parallel, Figure \ref{fig:MonteCarlo} shows the graph of $\varphi$ obtained by plain Monte-Carlo simulations. While the Monte-Carlo method is noisy, it clearly preserves the general shape of the function $\varphi$, and so, it seems to be a promising method to work with. 
	
	\begin{minipage}{0.5\linewidth}
		\begin{figure}[H]
			\centering
			\includegraphics[scale=0.135]{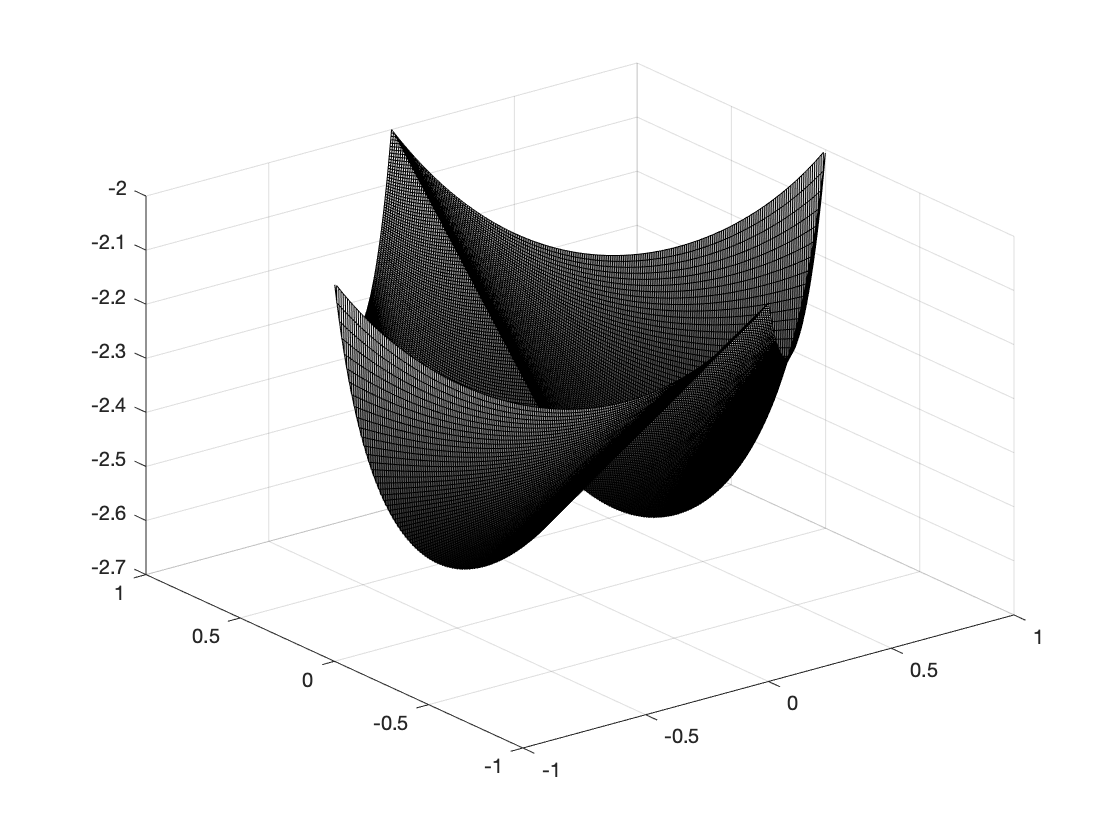}
			\caption{Explicit formula}
			\label{fig:Explicit}
		\end{figure}
	\end{minipage}
	\begin{minipage}{0.5\linewidth}
		\begin{figure}[H]
			\centering
			\includegraphics[scale=0.135]{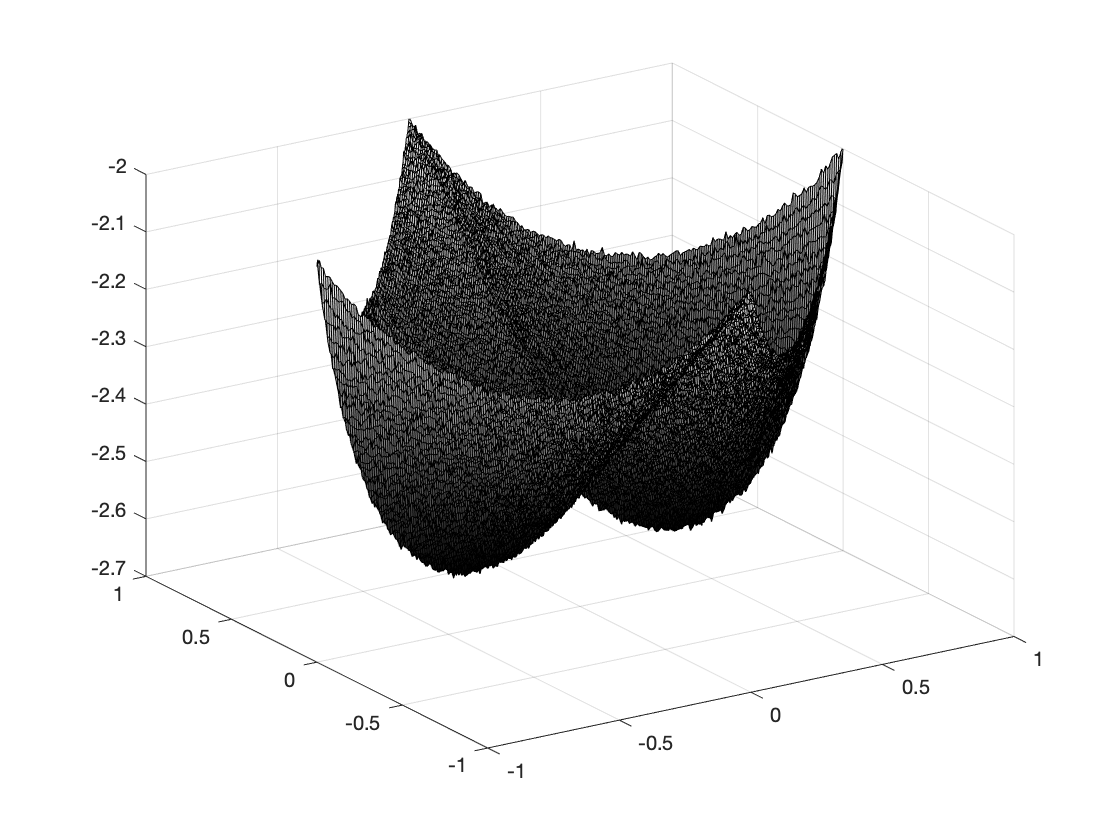}
			\caption{Monte-Carlo method}
			\label{fig:MonteCarlo}
		\end{figure}
	\end{minipage}
	
	
	To find the solution of the neutral approach we used the Differential Evolution metaheuristic (see, e.g., \cite{Yang2014}). The use of evolutionary metaheuristics in bilevel optimization is not new (see, e.g., \cite{Dempe2020Bibliography} and the references therein). However, to the best of our knowledge, this is the first time that it is 
	used to estimate the solutions of the Bayesian approach. Our method is described in Algorithm \ref{alg:DiffEvol}.
	
	\begin{algorithm}
		\caption{Differential Evolution for bilevel programming with linear lower-level under neutral approach}
		\label{alg:DiffEvol}
		\begin{algorithmic}[1]
			\REQUIRE \small{Linear problem of the follower $(A,B,b,c)$ with nonempty bounded feasible region $\mathscr{F} = \{(x,y)\in\R^d\times\R^p:Ax+By\leq b\}$; Objective function of the leader $\theta$ continuous; Sampling size $N$.}
			\STATE{\small Compute $X = \proj_{\R^d}(\mathscr{F})$.}
			\STATE{\small To evaluate $\E_{\iota_x}(\theta(x,\cdot))$ we do two steps:}
			\bindent
			\STATE{\small Embed $S(x)$ in a space of dimension $\adim(S(x))$ and find a bounded box $B$ containing $S(x)$.}
			\STATE{\small Compute $\E_{\iota_x}(\theta(x,\cdot))$ using Monte-Carlo simulations in the embedded box $B$} with $N$ samplings.
			\eindent
			\STATE{\small Run the Differential Evolution metaheuristic with the objective function $\varphi(x) = \E_{\iota_x}(\theta(x,\cdot))$ and the constraint $x\in X$.}
			\RETURN \small{$x^*$ as the Output of the Differential Evolution metaheuristic.}
		\end{algorithmic}
	\end{algorithm}
	
	The numerical experiments were done in \texttt{Matlab R2019b} \cite{MATLAB:R2019b}  using the open-source implementation of M. Buehren \cite{DiffEvolutionMatlab} for the Differential Evolution metaheuristic. We tested the algorithm with 21 linear bilevel problems of the BOLIB library (see \cite{zhou2020bolib} for a description of the library) and the new academic problem of Example \ref{ex:neutralsolnotvertex}. We also tested nonlinear bilevel problems from BOLIB library with linear lower-level problems (problems 24 to 27 in Table \ref{table:bolibneutral}). We selected only the problems of BOLIB with bounded feasible region. For problems with coupling constraints, we modified them passing the constraints to the follower (which is the case only for Problem 26). The sampling size was fixed at $N=10^6$. Results are summarized in Table \ref{table:bolibneutral}.  
	
	\begin{table}[h!]
		\begin{center}
			\scriptsize
			\begin{tabular}{c|l|c|c|c|r|r}
				\hline
				ID & Reference& $n_x$&$n_y$&$n_g$ & $\varphi^o$ &$\varphi$ (DE)\\
				\hline
				1&AnandalinghamWhite1990  &   1&1& 7 & -49 & -49\\
				2&Bard1984a               &   1&1& 6 & 28/9 & 3.111111\\
				3&Bard1984b               &   1&1& 6 & -37.6 & -37.6\\
				4&Bard1991Ex2             &   1&2 &6 & -1 & -1\\
				5&BardFalk1982Ex2         &   2&2& 7 & -3.25 &-3.25\\
				6&Ben-AyedBlair1990a      &   1&2& 6 & -2.5 & -2.5 \\
				7&Ben-AyedBlair1990b      &   1& 1& 5 & -6 & -6 \\
				8&BialasKarwan1984a       &   1&2& 8 & -2 &-2 \\
				9&BialasKarwan1984b       &   1&1& 7 & -11 &-11 \\
				10&CandlerTownsley1982     &   2&3& 8 &-29.2 & -29.2 \\
				11&ClarkWesterberg1988     &   1&1& 3 & -37 & -37 \\
				12&ClarkWesterberg1990b    &   1&2& 7 & -13  &-9.5062 \\
				13&GlackinEtal2009         &   2&1& 6 & 6 & 6.0171\\
				14&HaurieSavardWhite1990   &   1&1& 4 & 27 & 27\\
				15&HuHuangZhang2009        &   1&2& 6 & -76/9 &-8.7778 \\
				16&LanWenShihLee2007       &   1&1& 8 & -85.09 & -85.0909 \\
				17&LiuHart1994             &   1&1& 5 & -16 &-16\\
				18&TuyEtal1993             &   2&2& 7 & -3.25 & -3.25 \\
				19&TuyEtal1994             &   2&2& 6 & 6 &6\\
				21&VisweswaranEtal1996     &   1&1& 6 & 28/9& 3.111122 \\
				22&Example \ref{ex:neutralsolnotvertex}    & 2&2&12& -7 & -2.61223 \\
				\hline
				\hline
				23&BardBook1998Ex832    & 2&2&11& 0 & 2.13643e-09 \\
				24&LamparielloSagratella2017Ex35 &1&1&5& 0.8 & 0.8   \\
				25&TuyEtal2007*& 1& 1& 5 & 22.5 & 22.5\\
				26&WanWangLv2011**&2&3&8& - & 7.500037\\ 
				\hline
			\end{tabular}
		\end{center}
		\caption{\small Optimistic value vs Neutral value of solutions for the bilevel problems with linear lower-level of BOLIB and Example \ref{ex:neutralsolnotvertex}. Problems 23-26 have nonlinear objective functions for the leader. $n_x, n_y$ and $n_g$ are the x-dimension, y-dimension, and number of constraints, respectively. $\varphi^o$ is the registered optimistic value (see \cite{zhou2020bolib}). $\varphi$ has the value given by Algorithm \ref{alg:DiffEvol}.	}
		\label{table:bolibneutral}
	\end{table}
	An important observation is that for any bilevel programming problem, if there exists an optimistic solution $x$ such that $S(x)$ is a singleton, then any optimistic solution is also a Bayesian solution for any belief of the leader, in particular, for the Neutral belief. This is what occurs in most problems of BOLIB, and so, with the exception of problem 12, the exact solution of the neutral approach coincides with the value of $\varphi^o$. 
	
	In the case of Example \ref{ex:neutralsolnotvertex}, we also run a classic interior-point algorithm (implemented in the method \texttt{fmincon} of \texttt{Matlab R2019b}) in the positive quadrant, using the explicit formula of \eqref{eq:explicitformula}. We obtained the point $\bar{x} = (0.391,0)$ as a neutral solution, with neutral value $\varphi(\bar{x}) = -2.6001$. The result of the DE heuristic for the same problem was $\bar{x} = (-0.3954,0.0003)$ with value $\varphi(\bar{x}) = -2.61223$. Recalling the symmetry of the problem, the results are very similar, with an error of $0.5\%$. The errors come from the heuristic itself, but also from the noise of Monte-Carlo estimation. The overestimated value $-2.61223$ is referable to this noise and the fact that the DE metaheuristic preserves the best value, including overestimation. 
	
	

	\section{Final comments and perspectives} \label{sec:Final}
	
	In this work we extended previous existence results for bilevel problems in the Bayesian approach admitting change of dimension beyond the  
	case of full dimension (nonempty interior of values) and null dimension (single-valued).
	Our existence results are based on the weak continuity of the beliefs (as decision-dependent distributions) and the rectangular continuity property, which allows to control the possible changes of affine dimension of the reaction map of the followers. Our results cover regularized problems (Corollary \ref{cor:ExistenceRegularized}) but also (non-regularized) problems with linear lower-level (Corollary \ref{cor:ExistenceLinear}) or, more generally, with separable 
	and convex weakly-analytic data in the sense of formulation \eqref{eqconvexproblem}. The bilevel game model we consider does not include coupling constraints, which we think would be an interesting future development of the subject.
	
	The condition of rectangular continuity introduced in this work might seem a bit technical but really served as a tool to prove Corollary \ref{cor:ExistenceLinear}, which was the main motivation of the work.
	
	An open question left by this work is whether rectangular continuity over the reaction map $S$ is in some sense minimal in order to ensure that beliefs with continuous strictly positive densities over $S$ are weak continuous. Otherwise, it would be interesting to show a counterexample or more importantly to find a more general (or an independent) condition than rectangular continuity enabling us to prove weak continuity of beliefs over $S$.
	
	We tested the Neutral approach (Bayesian approach with neutral belief) using a Differential Evolution metaheuristic for a family of test problems whose lower-level is linear, since we know that in this case the neutral value function is continuous. The evaluation of the value function of the neutral approach was estimated using plain Monte-Carlo integration. The obtained results are promising, since in most cases, we could compare the results with the theoretical exact solution, obtaining quite similar values. In a future work, we will further develop the algorithmic aspect of the Bayesian approach. As hinted in the Example \ref{ex:neutralsolnotvertex}, it seems that there is a stronger regularity of the value function for linear problems that needs to be explored. 
	
	The development of algorithms or heuristics to tackle the Bayesian approach remains as a perspective for this work. In particular, we would like to develop algorithms to compute solutions for both the case with one leader and the case with multiple leaders.
	
	\section*{Acknowledgements}
	We thank L. Mallozzi for letting us know about her work with J. Morgan \cite{Mallozzi1996}, which we were not aware of when the first preprint version of this work was communicated. We are also grateful to S. Dempe for his comments on the first version.

	\bibliographystyle{plain}
	\bibliography{Biblio2.bib}
\end{document}